\NeedsTeXFormat{LaTeX2e}
\documentclass[12pt]{article}
\usepackage{amsmath,amsxtra,amssymb,latexsym, amscd,amsthm}

\advance\voffset-1truecm\relax \advance\hoffset-1truecm\relax

scaled \magstep2  \font\tac=cmcsc10
scaled \magstep1  

\def\R{{\Bbb R}}
\def\N{{\Bbb N}}
\def\R{{\Bbb R}}
\def\C{{\Bbb C}}
\def\P{{\Bbb P}}
\def\M{{\Bbb M}}
\def\Z{{\Bbb Z}}

\numberwithin{equation}{section}

\newtheorem{lemma}{Lemma}[section]
\newtheorem{theorem}[lemma]{Theorem}

\newtheorem{proposition}[lemma]{Proposition}
\newtheorem{definition}[lemma]{Definition}
\newtheorem{remark}[lemma]{Remark}
\def\leq{\leqslant}

\begin{document}
\title{ AN EXTENSION OF THE CARTAN-NOCHKA  SECOND MAIN THEOREM
 FOR HYPERSURFACES}

\author{\tac  Gerd Dethloff, Tran Van Tan and Do Duc Thai}
\date{$\quad$}
\maketitle
\vspace{-0.5cm}
\begin{abstract}
\noindent In 1983,  Nochka proved a conjecture of Cartan on defects of holomorphic curves in $\C P^n$ relative to a possibly degenerate set of hyperplanes. In this paper, we generalize the Nochka's theorem to the case of curves in a complex projective variety intersecting hypersurfaces in subgeneral position.
\end{abstract}

\section{Introduction and statements}
Let $f$ be a holomorphic mapping of $\C$ into $\C P^M,$ with a
reduced representation $f=(f_0:\cdots:f_M).$ The characteristic
function $T_f(r)$ of $f$ is defined by
\begin{align*}
T_f(r):=\frac{1}{2\pi}\int_0^{2\pi}\log\Vert f(re^{i\theta})\Vert
d\theta,
\end{align*}
where $\Vert f\Vert:=\max\{|f_0|,\dots,|f_M|\}.$

\hbox to 5cm {\hrulefill }

 \noindent {\small Mathematics Subject
Classification 2000: Primary 32H30; Secondary 32H04, 32H25, 14J70.}

\noindent {\small Key words and phrases:  Nevanlinna theory, Second
Main Theorem.}

 \noindent{\small The second named author was partially supported by the  Abdus Salam International
Centre for Theoretical Physics.}

Let $L$ be a positive integer or $+\infty,$ and let $\nu$ be a
divisor on $\C.$ Set $\nu^{[L]}(z):=\min \{ \nu (z), L \}.$ The
truncated counting function to level $L$ of $\nu$ is defined  by
\begin{align*}
N^{[L]}_\nu(r) := \int\limits_1^r \frac{\sum_{\mid z\mid
<t}\nu^{[L]}(z)}{t}dt\quad (1 < r < +\infty).
\end{align*}

Let $\varphi$ be a nonzero meromorphic function on $\C$. Denote by
$\nu_\varphi$ be the zero divisor of $\varphi.$ Set
$N_\varphi^{[L]}(r):=N_{\nu_\varphi}^{[L]}(r).$

Let $D$ be a hypersurface in $\C P^M$ of degree $d\geq 1.$ Let
$Q\in\C[x_0,\dots,x_M]$ be a homogeneous polynomial of degree $d$
defining $D.$ Set
 $\nu_D^{[L]}(f):=\nu_{Q(f)}^{[L]},$ and $ N_f^{[L]}(r,
 D):=N_{Q(f)}^{[L]}(r).$
 For brevity we will
omit the character ${}^{[L]}$ in the counting function and in the
divisor if $L=+\infty.$

For the holomorphic function $\varphi,$ we have the following  Jensen's formula
\begin{align*}
N_\varphi(r)=\int_0^{2\pi}\log|\varphi(re^{i\theta})|\frac{d\theta}{2\pi}+O(1).
\end{align*}

Let $V \subset \C P^M$ be a smooth complex projective variety of
dimension $n\geq 1.$ Let $D_1,\dots,D_k$ $(k\geq 1)$ be hypersurfaces in 
$\C P^M$ of degree $d_ j.$
The hypersurfaces $D_1,\dots,D_k$   are said to be in {\it general position in $V$} if  for any distinct indices $1\leq i_1<\dots<i_s\leq k,\;(1\leq s\leq n+1),$ there exist hypersurfaces $D'_1,\dots,D'_{n+1-s}$ in $\C P^M$ such that $$V \cap D_{i_1}\cap\cdots\cap D_{i_s}\cap D'_1\cap\cdots\cap D'_{n+1-s}=\varnothing$$
(see Noguchi-Winkelmann \cite{NW} and Ru \cite{R3} for similar definitions).
In particular for hypersurfaces $D_1,\dots,D_k$ in general position in $V$, we have $V\not\subseteq D_j$ for all $j=1,...,k$. By definition, we also call an empty set of hypersurfaces
in  $\C P^M$ to be in general position in $V$.
\begin{definition}
 Let $N\geq n$ and $q\geq N+1.$
Hypersurfaces $D_1,\dots, D_q$ in $\C P^M$ with
$V\not\subseteq D_j$ for all $j=1,...,q$
are said to be in $N$-subgeneral position in $V$ if the two following conditions are satisfied:

 (i)$\quad$ For every $1\leq j_0<\cdots<j_N\leq q,$ $V \cap D_{j_0}\cap\cdots\cap D_{j_N}=\varnothing.$

 (ii)$\quad$ For any subset $J\subset\{1,\dots,q\}$ such that  $0<\#J\leq n+1$ and $\{D_j,\; j\in J\}$ are in general position in $V$
 and $V \cap (\cap_{j\in J}D_j)\not= \emptyset$, there exists an irreducible component $\sigma_J$ of $V \cap (\cap_{j\in J}D_j)$ with $\dim\sigma_J=\dim \big( V \cap (\cap_{j\in J}D_j)\big)$ such that for any $i\in\{1,\dots,q\}\setminus J$, if $\dim \big( V \cap (\cap_{j\in J}D_j)\big)= \dim \big(V \cap D_i \cap (\cap_{j\in J}D_j)\big),$ then $D_i$ contains $\sigma_J.$
\end{definition}
We first remark that if $V= \C P^M$ is a complex projective space and $\{D_j\}_{j=1}^q$ are hyperplanes, then the condition (ii) in the above definition is automatically satisfied. We also note that in the case where $N=n,$ the condition (i) implies the condition (ii). Therefore, in this case the above definition coincides with the concept of general position. \\

We finally construct an example of hypersurfaces in $(n+1)$-subgeneral position in $V$, which are, however, not in general position in $V$: Let $D_1,\dots, D_q$ $(q\geq n+1)$ be  hypersurfaces  in $\C P^M$ in general position in $V$. Let $\{J_1,\dots,J_K\}$ $(K=\binom{q}{n})$ be the set of all subsets $J$ of $\{1,\dots,q\}$ such that $\# J=n.$ It is clear that $0<\#\big( V \cap (\cap_{j\in J_i}D_j)\big)<\infty$ for all $1\leq i\leq K.$   We define hypersurfaces $D_{t_1},\dots,D_{t_K}$ in $\C P^m$ by induction as follows: Take a hypersurface  $D_{t_1}$ passing through a point $A_1\in V \cap (\cap_{j\in J_1}D_j),$ but not containing any irreducible component $\sigma$ of $V \cap (\cap_{j\in J}D_j)$ with $\dim\sigma=\dim\big( V \cap (\cap_{j\in J}D_j) \big)$ for all $J_1\ne J\subset \{1,\dots,q\}$ with $0<\# J\leq n$ (note that the number of these irreducible components $\sigma$ is finite, and  $\{A_1\}\ne\sigma,$ since $D_1,\dots,D_q$ are in general position in $V$). Then, for any $\varnothing\ne J\subset\{1,\dots,q,t_1\},\;\#J\leq n+1,$ $J\ne J_1\cup\{t_1\},$ the hypersurfaces $\{D_j,\;j\in J\}$ are in general position in $V$. Assume that hypersurfaces $D_{t_1},\dots,D_{t_{i-1}}$ $(2\leq i\leq K)$ in $\C P^M$ are chosen, we next  choose a hypersurface $D_{t_i}$  in $\C P^M$ passing through a point $A_i\in V \cap (\cap_{j\in J_i}D_j),$ but  not containing any irreducible component $\sigma$ of   $V \cap (\cap_{j\in J}D_j)$ with $\dim\sigma=\dim \big( V \cap (\cap_{j\in J}D_j)\big)$ for any $J_i\ne J\subset \{1,\dots,q,t_1,\dots,t_{i-1}\}$ with $0<\# J\leq n$  (note that $\{A_i\}\ne\sigma$ since $\{D_j,\;j\in J'\}$ are in general position in $V$ for all $ J'\subset\{1,\dots,q,t_1,\dots,t_{i-1}\},\;0<\#J'\leq n+1,\;J'\ne J_s\cup\{t_{s}\}\;(s=1,\dots,i-1)$). By our choices of the $D_{t_i}$'s, for any $J\subset\{1,\dots,q,t_{1},\dots,t_K\},\# J\leq n+1,$ the hypersurfaces $\{D_j,\;j\in J\}$ are in general position in $V$ except in the cases $J=J_i\cup\{t_i\}\;(i=1,\dots,K).$ Therefore for any $\varnothing \ne J\subset \{1,\dots,q,t_1,\dots,t_K\},\;\#J\leq n+1$ such that $\{D_j,\;j\in J\}$ are in general position in $V$ and for any $i\in\{1,\dots,q,t_1,\dots,t_K\}\setminus J,$ we have that $\dim \big(V \cap D_i \cap (\cap_{j\in J}D_j)\big)=\dim \big( V \cap (\cap_{j\in J} D_j)\big)$ if and only if either $\#J=n+1$ and then $V \cap (\cap_{j\in J}D_j)=\varnothing$ or there exists $k\in\{1,\dots,K\}$ such that $\{i\} \cup J=\{t_k\} \cup J_k$.  This implies that for $N=n+1$, the hypersurfaces $D_1,\dots, D_q,D_{t_1},\dots,D_{t_K}$ satisfy the conditions $(i)$  and $(ii)$ of Definition 1.1, and, hence,  they are in $(n+1)$-subgeneral position. But, they are not in general position.\\

 In 1933, Cartan \cite{C} proved the Second Main Theorem for linearly nondegenerate holomorphic mappings of $\C$ into $\C P^n$ intersecting hyperplanes in general position. He also proposed a conjecture for the case where the hyperplanes are only in subgeneral position. This conjecture was solved by Nochka \cite{N}.

\noindent As usual, by the notation ``$\Vert P$" we mean the assertion $P$
holds for all $r \in [1, +\infty)$ excluding a Borel subset $E$ of
$(1, +\infty)$ with $\displaystyle{\int\limits_E} dr < +  \infty$.
\begin{theorem}[Nochka]
Let $f$ be a linearly nondegenerate holomorphic mapping of $\C$ into $\C P^n$ and let $H_1,\dots,H_q$ be hyperplanes in $\C P^n$ in N-subgeneral position, where $N\geq n$ and  $q\geq 2N-n+1.$  Then, for every $\epsilon>0,$
\begin{align*}
\big\Vert(q-2N+n-1-\epsilon)T_f(r)\leq\sum_{j=1}^qN_f^{[n]}(r, H_j).
\end{align*}
\end{theorem}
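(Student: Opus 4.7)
The natural strategy is to reduce the claim to the classical Cartan Second Main Theorem for hyperplanes in general position by assigning carefully chosen weights to the hyperplanes. The first and most important step is \emph{Nochka's weight lemma}: under the hypotheses of the theorem there exist real numbers $\omega_1,\dots,\omega_q\in(0,1]$ (the Nochka weights) together with a constant $\tilde\omega\geq 1$ (the Nochka constant) such that
\begin{align*}
\sum_{j=1}^q\omega_j\;=\;\tilde\omega(q-2N+n-1)+n+1,
\end{align*}
and such that for every nonempty subset $R\subset\{1,\dots,q\}$ with $\#R\leq N+1$ one has $\sum_{j\in R}\omega_j\leq\tilde\omega\cdot\mathrm{rank}\{H_j\}_{j\in R}$, where the rank is that of the linear span of the defining forms. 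This is a purely combinatorial statement depending only on the linear incidence pattern of $H_1,\dots,H_q$, and I would prove it by a greedy/inductive construction, peeling off ``saturated'' subfamilies of minimal weighted rank one at a time.

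Next I would carry out the standard pointwise analysis at each $z\in\C$. Fixing $z$, reorder the hyperplanes so that $|H_{j_0(z)}(f(z))|\leq\cdots\leq|H_{j_{q-1}(z)}(f(z))|$; condition (i) of $N$-subgeneral position forces the first $N+1$ of these to contain $n+1$ linearly independent ones, say $H_{j_0(z)},\dots,H_{j_n(z)}$. Since for indices outside the ``close'' block the quantity $\log(\|f(z)\|/|H_j(f(z))|)$ is bounded, and since the Nochka rank inequality forces the weighted sum over any initial block to be dominated by the linearly independent selection, one obtains the pointwise comparison
\begin{align*}
\sum_{j=1}^q\omega_j\log\frac{\|f(z)\|}{|H_j(f(z))|}\;\leq\;\tilde\omega\sum_{k=0}^n\log\frac{\|f(z)\|}{|H_{j_k(z)}(f(z))|}+O(1),
\end{align*}
which reduces the full proximity contribution from the $q$ hyperplanes to a subsystem of $n+1$ linearly independent ones.

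Finally, for each of the finitely many subfamilies of $n+1$ linearly independent hyperplanes drawn from $\{H_1,\dots,H_q\}$, apply the Wronskian estimate together with the logarithmic derivative lemma at the heart of Cartan's Second Main Theorem, integrate around the circle of radius $r$ via Jensen's formula, and combine with the identity $\sum\omega_j=\tilde\omega(q-2N+n-1)+n+1$ together with $\omega_j\leq 1\leq\tilde\omega$ to deduce
\begin{align*}
\big\Vert\;(q-2N+n-1-\epsilon)T_f(r)\;\leq\;\tfrac{1}{\tilde\omega}\sum_{j=1}^q\omega_j N_f^{[n]}(r,H_j)\;\leq\;\sum_{j=1}^q N_f^{[n]}(r,H_j).
\end{align*}
The principal obstacle throughout is the combinatorial construction of the Nochka weights, where all the delicacy of the $N$-subgeneral position hypothesis is concentrated; once these weights are in hand, the analytic half of the argument is a direct application of Cartan's classical machinery to each of the finitely many linearly independent subsystems, combined with the pointwise weighted comparison above.
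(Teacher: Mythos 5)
Your overall strategy is the right one: it is the classical Nochka--Cartan argument, and it is exactly the machinery this paper generalizes. Note, however, that the paper itself does not prove this statement (it quotes it from Nochka); the correct form of the key combinatorial lemma appears in the paper as Proposition \ref{P24} together with Theorem \ref{Th25}, specialized to hyperplanes, where $c(R)$ is the rank of the linear forms $\{H_j\}_{j\in R}$. The genuine gap in your write-up is that your ``Nochka weight lemma'' is mis-stated: you attach the Nochka constant to the wrong term, and this is fatal rather than cosmetic. In the normalization with $\omega_j\leq 1$ and $\tilde\omega\geq 1$ the correct identity is $\sum_{j=1}^q\omega_j=(q-2N+n-1)+\tilde\omega(n+1)$ together with $\sum_{j\in R}\omega_j\leq\tilde\omega\,\mathrm{rank}\{H_j\}_{j\in R}$; in the paper's normalization it is $0<\omega_j\leq\Theta\leq 1$, $\sum_{j=1}^q\omega_j=\Theta(q-2N+n-1)+n+1$, $\frac{n+1}{2N-n+1}\leq\Theta\leq\frac{n+1}{N+1}$, with the rank condition $\sum_{j\in R}\omega_j\leq\mathrm{rank}\{H_j\}_{j\in R}$ carrying no constant. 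Your hybrid (constant $\tilde\omega\geq 1$ multiplying $q-2N+n-1$ \emph{and} appearing in the rank condition) is false in general: by condition (i) of $N$-subgeneral position, every $R$ with $\#R=N+1$ has rank exactly $n+1$, so your rank condition forces $\sum_{j\in R}\omega_j\leq\tilde\omega(n+1)$ for all such $R$; averaging over all $\binom{q}{N+1}$ of them gives $\sum_{j=1}^q\omega_j\leq\frac{q(n+1)}{N+1}\tilde\omega$, which is incompatible with your identity $\sum_{j=1}^q\omega_j=\tilde\omega(q-2N+n-1)+n+1$ for every $\tilde\omega\geq 1$ as soon as $N>n$ and $q(N-n)\geq(N+1)(2N-n+1)$ (for instance $n=1$, $N=2$, $q\geq 12$). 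So the weights you postulate simply do not exist in the range of $q$ where the theorem is most interesting.

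Moreover, even in configurations where your conditions happen to be satisfiable, the final bookkeeping does not close. Integrating your pointwise comparison and inserting Cartan--Ru's Second Main Theorem gives $\sum_j\omega_j\big(T_f(r)-N_f(r,H_j)\big)\leq\tilde\omega\big((n+1+\epsilon)T_f(r)-N_{W(f)}(r)\big)+O(1)$; your identity supplies only $(n+1)T_f(r)$ on the left, not $\tilde\omega(n+1)T_f(r)$, so the $(n+1)$-terms fail to cancel and you are left with an extra $(n+1)\big(1-\frac{1}{\tilde\omega}\big)T_f(r)$ on the right. After the Wronskian truncation step your method therefore proves only $\big(q-2N+n-1-(n+1)(1-\frac{1}{\tilde\omega})-\epsilon\big)T_f(r)\leq\sum_{j=1}^qN_f^{[n]}(r,H_j)$, strictly weaker than the theorem whenever $\tilde\omega>1$. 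With the correct lemma the cancellation is exact: in the paper's normalization the pointwise comparison (Theorem \ref{Th25}) holds with no constant at all, one obtains $\Theta(q-2N+n-1-\epsilon)T_f(r)\leq\sum_j\omega_jN_f^{[n]}(r,H_j)$ after the truncation step, and the conclusion follows from $\omega_j\leq\Theta$ upon dividing by $\Theta$, the lower bound $\Theta\geq\frac{n+1}{2N-n+1}$ keeping the $\epsilon$-loss under control. Restate the weight lemma in one of these two correct normalizations and the rest of your outline becomes the standard proof.
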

Recently, the Second Main Theorem for the case of
hypersurfaces in general position was established  by Ru (\cite{R2}, \cite{R3}), see also Dethloff and Tan
\cite{DT}. For the case where hypersurfaces are not in general position, in \cite{ThTh} Thai and Thu  obtained a Second Main Theorem for algebraically non-degenerate holomorphic maps $f:\C \rightarrow \C P^k
\subset \C P^n$, without truncated multiplicities, and for a special class of hypersurfaces in $\C P^n$.

In 2009, Ru \cite{R3} proved that
\begin{theorem}\label{Th3}
Let $V\subset \C P^M$ be a smooth complex projective variety of
dimension $n\geq 1.$ Let $f$ be an algebraically nondegenerate
holomorphic mapping of $\C$ into $V.$ Let $D_1,\dots ,D_q $ be
hypersurfaces in $\C P^M$ of degree $d_j,$ in general position in
$V.$ Then for every $\epsilon>0,$
\begin{align*}
\Big\Vert(q-n-1-\epsilon)T_f(r)\leq\sum_{j=1}^q\frac{1}{d_j}N(r,D_j).
\end{align*}
\end{theorem}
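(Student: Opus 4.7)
The plan is to follow Ru's approach by reducing to Cartan's classical Second Main Theorem for hyperplanes through a projective embedding tailored to the hypersurfaces $D_1,\dots,D_q$. After replacing each $Q_j$ by its $d/d_j$-th power, where $d$ is a common multiple of $d_1,\dots,d_q$, one may assume all $D_j$ share the common degree $d$. For a large positive integer $m$, consider the space $H^0(V,\mathcal{O}_V(md))$, identified with $\C[x_0,\dots,x_M]_{md}/(I_V)_{md}$, whose dimension is the Hilbert function $H_V(md)$.

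First I would define, for each $z\in\C$ outside an exceptional set, an ordering of the values $|Q_j(f(z))|/\Vert f(z)\Vert^d$ from smallest to largest. The general position hypothesis implies that at each $z$ at most $n$ of these values can vanish simultaneously; in particular any $n+1$ indices $j_0<\cdots<j_n$ give hypersurfaces whose intersection with $V$ is empty. For each such $(n+1)$-tuple one filters $H^0(V,\mathcal{O}_V(md))$ by subspaces spanned by monomials in $Q_{j_0},\dots,Q_{j_n}$ of prescribed multi-degree, and invokes Mumford's comparison between the Chow weight and the Hilbert weight of $V$. The output is a basis $\{\phi_1,\dots,\phi_{H_V(md)}\}$, consisting of monomials in the $Q_j$, such that at the given $z$
\begin{align*}
\sum_{i=0}^{n}\frac{1}{d}\log\frac{\Vert f(z)\Vert^{d}}{|Q_{j_i}(f(z))|}\le\Big(\tfrac{n+1}{md\,H_V(md)}+\epsilon\Big)\sum_{k=1}^{H_V(md)}\log\frac{\Vert f(z)\Vert^{md}}{|\phi_k(f(z))|}.
\end{align*}

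Next I would apply Cartan's Second Main Theorem (with truncation to level $H_V(md)-1$) to the composite map $F=(\phi_1\circ f:\cdots:\phi_{H_V(md)}\circ f):\C\to\C P^{H_V(md)-1}$, which is linearly nondegenerate because $f$ is algebraically nondegenerate and the $\phi_k$ are linearly independent on $V$. Since the combinatorial type of the local basis runs through a finite family (indexed by the choice of an $(n+1)$-subset and a flag), only finitely many distinct linear forms arise. Integrating the pointwise inequality over $|z|=r$, using Jensen's formula, $T_F(r)=md\,T_f(r)+O(1)$, and the asymptotic $H_V(md)=\deg(V)(md)^n/n!+O(m^{n-1})$, one obtains the coefficient $q-n-1$ on the left-hand side after choosing $m$ large relative to $\epsilon$ and summing the pointwise inequalities over all indices via the standard ``small/big'' dichotomy argument.

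\textbf{Main obstacle.} The delicate point is the Chow/Hilbert weight comparison for $V$ relative to the $(n+1)$-tuple $(Q_{j_0},\dots,Q_{j_n})$ at the asymptotic rate needed: the monomial filtration must be constructed so that, on the one hand, the Hilbert weights of the distinguished basis reproduce (up to $O(\epsilon)$) the expected Chow weight $(n+1)/(d\,H_V(md))\cdot\sum_i\deg$-contributions, and, on the other hand, the leading term matches $\deg(V)$ after letting $m\to\infty$. Once this asymptotic estimate is in place, the rest is a routine adaptation of Cartan's hyperplane machinery to the composite map $F$.
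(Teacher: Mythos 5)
Your outline follows, in essence, the same route as the paper: the paper never proves Theorem \ref{Th3} separately (it is quoted as Ru's theorem), but its proof of Theorem \ref{Th4} specializes to it when $N=n$, in which case all Nochka weights $\omega(j)$ and $\Theta$ equal $1$; and that proof consists exactly of your steps (reduction to common degree, an auxiliary curve built from monomials in the $Q_j(f)$, a Hilbert/Chow-weight estimate, a hyperplane Second Main Theorem, and $m\to\infty$). However, there is a genuine false step at the center of your construction: in general there is \emph{no} basis of $H^0(V,\mathcal{O}_V(md))\cong \C[x_0,\dots,x_M]_{md}/I_V(md)$ consisting of monomials in $Q_1,\dots,Q_q$. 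The degree-$m$ monomials in the $Q_j$ span only the image of $\C[y_1,\dots,y_q]_m/I_Y(m)$ under $y_j\mapsto Q_j$, where $Y=\Phi(V)$ and $\Phi=(Q_1:\cdots:Q_q):V\to\C P^{q-1}$; this subspace has dimension $H_Y(m)$, which is in general strictly smaller than $H_V(md)$ (for instance whenever $\Phi$ has degree $>1$ onto its image, or when the $Q_j$ span a proper subspace of the degree-$d$ forms: take $V=\C P^2$ and generic smooth conics with vanishing $x_0x_1$-coefficient, no three through a common point; in any product of $m$ of them, a term free of $x_2$ is a product of $x_0^2$'s and $x_1^2$'s, so the monomial $x_0^{2m-1}x_1$ never appears, and hence is not in the span). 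Consequently, taken literally, your $H_V(md)$ coordinates $\phi_k$ are linearly dependent, the curve $F$ is linearly degenerate in $\C P^{H_V(md)-1}$, its Wronskian vanishes identically, and the hyperplane Second Main Theorem applied to it is vacuous; if instead you complete to an honest basis of $H^0(V,\mathcal{O}_V(md))$, the extra basis elements are not monomials in the $Q_j$, so neither the weight vector $c_z$ nor the counting functions of those coordinates can be compared with the $E_{D_j}(f)$ and $N_f(r,D_j)$, and the final inequality cannot be assembled.

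The repair is precisely the paper's device (following Ru): work with $Y$ rather than with $V$, i.e.\ replace $H_V(md)$ by $H_Y(m)$ throughout, take $F$ to be the curve whose coordinates are \emph{all} $q_m$ monomials $Q^{I_i}(f)$, regarded as a linearly nondegenerate curve into the linear subvariety $P\subset \C P^{q_m-1}$ of dimension $H_Y(m)-1$ cut out by the linear forms vanishing on $F$ (the paper's (\ref{new2})), and apply the Hilbert-weight estimate of Lemma \ref{L2.1} to $Y$, for which (\ref{new1}) provides the hypothesis. A second, smaller inaccuracy: the coordinate hyperplanes $\{z_i=0\}\cap P$ are in general \emph{not} in general position in $P$, so classical Cartan cannot be applied to $F$; one needs the general form of the Second Main Theorem with the maximum over general-position subfamilies inside the integral (Ru 1997, the paper's Lemma \ref{L31})---this also replaces your ``finitely many linear forms plus dichotomy'' discussion. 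With these two corrections your sketch becomes the paper's argument, and the asymptotics you need survive the change, since $H_Y(m)\geq m+1$ by Lemma \ref{m+1} applied to $Y$, and $\deg Y\leq d^n\deg V$ by (\ref{deg}).
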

Motivated by the case of hyperplanes, in this paper we prove the following Second Main Theorem for
 hypersurfaces being in N-subgeneral position.
\begin{theorem}\label{Th4}
Let $V\subset \C P^M$ be a smooth complex projective variety of
dimension $n\geq 1.$ Let f be an algebraically nondegenerate
holomorphic mapping of $\C$ into $V.$ Let  $D_1,\dots, D_q$
($V\not\subseteq D_j$) be hypersurfaces in $\C P^M$ of
degree $d_j,$ in $N$-subgeneral position in $V,$ where $N\geq n$ and $q\geq 2N-n+1.$ Then, for every $\epsilon>0,$ there exist  positive
integers $L_j\:(j=1,...,q)$ depending on $n, \deg V, N, d_j \:(j=1,...,q),q, \epsilon$ in an explicit way such that
\begin{align}
\Big\Vert(q-2N+n-1-\epsilon)T_f(r)\leq \sum_{j=1}^q\frac{1}{d_j}N_f^{[L_j]}(r,D_j).
\end{align}
\end{theorem}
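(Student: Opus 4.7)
The plan is to adapt Nochka's weight strategy---which reduces the subgeneral position case to the general position case---from the hyperplane setting to the hypersurface-in-a-variety setting, and then to invoke the truncated version of the general-position Second Main Theorem (Theorem \ref{Th3}) established in Dethloff--Tan \cite{DT}. It is this truncated version that supplies the explicit truncation levels $L_j$; the Nochka reduction itself does not affect their form.

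\emph{Step 1 (Nochka weight lemma).} The main step is to construct rational numbers $\omega_1,\ldots,\omega_q\in(0,1]$ and a Nochka constant $\widetilde\omega\in(0,1]$, with $\widetilde\omega$ bounded below by an explicit positive quantity depending on $n$ and $N$, such that (a) $\sum_{j=1}^q\omega_j=\widetilde\omega(q-2N+n-1)+(n+1)$, and (c) for every $J\subseteq\{1,\ldots,q\}$ with $1\leq|J|\leq N+1$ such that $\{D_j\}_{j\in J}$ is in general position in $V$, one has $\sum_{j\in J}\omega_j\leq\widetilde\omega\,|J|$. Condition (ii) of Definition 1.1 is precisely the hypothesis that lets Nochka's original combinatorial construction go through in this non-linear setting: it supplies, for each $J$ whose intersection with $V$ is saturated, a distinguished irreducible component $\sigma_J$ playing the role that the linear span plays in the hyperplane case. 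One then builds a submodular rank-like function on subsets of $\{1,\ldots,q\}$ from the dimensions of the $\sigma_J$'s, and Nochka's convex/greedy procedure produces the weights.

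\emph{Steps 2 and 3 (pointwise reduction and conclusion).} For each $z\in\C$, relabel the hypersurfaces so that
\begin{align*}
\frac{|Q_{i_1}(f(z))|}{\|f(z)\|^{d_{i_1}}}\leq \cdots\leq \frac{|Q_{i_q}(f(z))|}{\|f(z)\|^{d_{i_q}}}.
\end{align*}
Condition (i) of $N$-subgeneral position, combined with a standard compactness argument on $V$ (no $N+1$ of the $D_j$ meet in $V$), forces $\log\bigl(\|f(z)\|^{d_{i_k}}/|Q_{i_k}(f(z))|\bigr)=O(1)$ for all $k\geq N+1$. An Abel-summation argument using (c), together with condition (ii) to extract from $\{i_1,\ldots,i_N\}$ a general-position subfamily $R^*(z)$ of size at most $n+1$, then yields the pointwise inequality
\begin{align*}
\sum_{j=1}^q \omega_j \log\frac{\|f(z)\|^{d_j}}{|Q_j(f(z))|}\;\leq\; \widetilde\omega \sum_{j\in R^*(z)}\log\frac{\|f(z)\|^{d_j}}{|Q_j(f(z))|}+O(1).
\end{align*}
Since only finitely many such $R^*$ can occur, integrating this inequality, appealing to the First Main Theorem $d_j T_f(r)=N_f(r,D_j)+m_f(r,D_j)+O(1)$, and applying the truncated version of Theorem \ref{Th3} from \cite{DT} separately to each possible general-position subfamily $\{D_j\}_{j\in R^*}$ (which is what introduces the levels $L_j$ with their stated explicit dependence on $n,\deg V, N, d_j, q, \epsilon$), and then rearranging via the identity (a), delivers
\begin{align*}
\Big\|(q-2N+n-1-\epsilon)T_f(r)\leq\sum_{j=1}^q\frac{1}{d_j}N_f^{[L_j]}(r,D_j).
\end{align*}

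\emph{Main obstacle.} The main technical difficulty lies in Step 1. Classical Nochka weights are built from the matroid of linear dependencies of hyperplanes in $\C P^n$, and it is not a priori clear that the weaker incidence structure of hypersurfaces in a variety supports an analogous construction. Condition (ii) of Definition 1.1 was formulated precisely so that the distinguished components $\sigma_J$ play the role of linear spans and provide the submodular rank function on subsets that Nochka's construction needs; verifying that the resulting convex/greedy procedure actually outputs weights satisfying (c) for every general-position $J$ of size up to $N+1$ is the crux of the proof. Once Step 1 is established, Steps 2 and 3 are a largely mechanical translation of the hyperplane Nochka template combined with an off-the-shelf application of the truncated Ru-type theorem.
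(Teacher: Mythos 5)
The central gap is in Steps 2--3, which you describe as ``largely mechanical'' and ``off-the-shelf'': the theorem you propose to quote does not exist. Reference \cite{DT} proves a truncated second main theorem for (moving) hypersurfaces and for curves that are nondegenerate in a complex projective space, i.e.\ for $V=\C P^n$; it says nothing about algebraically nondegenerate maps into an arbitrary smooth projective variety $V\subset\C P^M$, and Ru's Theorem \ref{Th3} for general $V$ carries no truncation at all. Indeed the paper states explicitly that obtaining the truncation requires ``a new method using Hilbert weights, which is, in particular, different from the method which is used in the case of nondegenerate holomorphic curves in a complex projective space (see Dethloff-Tan \cite{DT})''. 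Moreover, even granting such a truncated statement, the way you invoke it cannot work: the subfamilies $R^*(z)$ produced by the Nochka reduction are in general position with $\#R^*\leq n+1$, so Theorem \ref{Th3} (truncated or not) applied to one of them reads $(\#R^*-n-1-\epsilon)T_f(r)\leq\sum_{j\in R^*}\frac{1}{d_j}N_f^{[L_j]}(r,D_j)$, whose left-hand side is negative; every such application is vacuous and yields no control on the quantity you need to bound. Note also that after integration you must bound the integral of the pointwise \emph{maximum} over general-position subsets, and since $\int\max_K\geq\max_K\int$, bounds obtained family by family cannot be reassembled into a bound for that integral.

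What is actually required at this point is an inequality of the shape $\int_0^{2\pi}\max_{K}\sum_{j\in K}\log\frac{\Vert f\Vert^{d}\cdot\Vert Q_j\Vert}{|Q_j(f)|}\frac{d\theta}{2\pi}\leq(n+1+\epsilon)\,d\,T_f(r)-(\text{ramification term})$, where the ramification term can afterwards be traded for truncation of the counting functions; producing it is the bulk of the paper's proof. The paper maps $V$ into $\C P^{q-1}$ by $x\mapsto(Q_1(x):\cdots:Q_q(x))$, passes to the image variety $Y$ and to the curve $F$ built from the degree-$m$ monomials in the $Q_j(f)$, and combines the Hilbert-weight estimate (Lemma \ref{L2.1}) with Ru's hyperplane theorem containing the Wronskian term $N_{W(F)}(r)$ (Lemma \ref{L31}); then, in a second Hilbert-weight argument applied to the divisors $\nu_{Q_j(f)}-\nu_{Q_j(f)}^{[H_Y(m)-1]}$ together with Fujimoto's lemma (Lemma \ref{L2.2}), it bounds $N_{W(F)}(r)$ from below by $\sum_j\omega(j)\big(N_f(r,D_j)-N_f^{[H_Y(m)-1]}(r,D_j)\big)$ minus a small error, which is precisely where the explicit levels $L_j$ come from. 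None of this machinery appears in your proposal. A smaller but real problem lies in Step 1: your condition (c), $\sum_{j\in J}\omega_j\leq\widetilde\omega\,|J|$ for general-position $J$, is trivially implied by $\omega_j\leq\widetilde\omega$ and is too weak; the property the reduction needs (Proposition \ref{P24} (iv), used in Theorem \ref{Th25}) is $\sum_{j\in R}\omega(j)\leq c(R)$ for \emph{all} subsets $R$ with $0<\#R\leq N+1$, where $c(R)$ is the codimension of $V\cap(\cap_{j\in R}D_j)$, in particular for degenerate families; and the extra factor $\widetilde\omega$ on the right of your pointwise inequality is stronger than what Nochka-type weights deliver (Theorem \ref{Th25} has no such factor).
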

The explicit bounds which we will get with the proof of Theorem \ref{Th4} are as follows:

\begin{proposition} \label{P5}
Assume without loss of generality that $\epsilon \leq 1$. Let $d$ be the least common
multiple of the  $d_j$'s. Put 
$$m_0=m_0(n,\deg V, N, d, q, \epsilon):= [4d^{n+1}q(2n+1)(2N-n+1)\deg V \cdot \frac{1}{\epsilon}]+1\,,$$
then 
\begin{align}
L_j \leq \big[ \frac{d_j \big( \big( 
\begin{array}{c}q+m_0-1\\m_0 \end{array}
\big) -1 \big)}{d} +1 \big]\:,
\end{align}
where we denote $[x]:=\max\{k\in
\Z: k\leq x\}$ for a real number $x.$

\end{proposition}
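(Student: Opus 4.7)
The strategy is to combine the Nochka weight technique underlying Theorem 1.2 (Nochka) with the filtration/Veronese argument used by Ru in his proof of Theorem \ref{Th3}. Condition (ii) of Definition 1.1 is designed precisely so that Nochka's combinatorial construction, originally phrased in terms of linear spans of hyperplanes, has a faithful analogue when these spans are replaced by intersections of hypersurfaces with $V$.

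First I would construct Nochka weights $\omega_1,\dots,\omega_q\in(0,1]$ and a Nochka constant $\tilde\omega\geq 1$ attached to the family $\{D_1,\dots,D_q\}$ in $N$-subgeneral position in $V$, satisfying the usual identity $\sum_j\omega_j=\tilde\omega(q-2N+n-1)+n+1$ together with a weighted Cartan-type estimate on every subset $J\subset\{1,\dots,q\}$ of size $\leq n+1$ for which $\{D_j\}_{j\in J}$ are in general position in $V$. The inductive construction of the $\omega_j$ mirrors Nochka's original argument, with intersections $V\cap\bigcap_{j\in J}D_j$ and the distinguished components $\sigma_J$ replacing linear spans; condition (ii) is exactly what lets $\sigma_J$ ``absorb'' an additional $D_i$ just as a linear subspace absorbs a hyperplane that contains it, so that the maximal-chain step in the Nochka weight lemma goes through verbatim.

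Next I would run Ru's filtration on $H^0(V,\mathcal{O}_V(m_0 d))$, where $d=\mathrm{lcm}(d_1,\dots,d_q)$ and $m_0$ is to be fixed. A basis of this space, composed with $f$, defines a linearly nondegenerate holomorphic map $F:\C\to \C P^{h_V(m_0 d)-1}$, where $h_V(m_0 d):=\dim H^0(V,\mathcal{O}_V(m_0 d))=\tfrac{\deg V}{n!}(m_0 d)^n+O(m_0^{n-1})$. For each subset $J$ on which $\{D_j\}_{j\in J}$ are in general position in $V$, Ru's filtration produces ``effective'' hyperplanes in $\C P^{h_V(m_0 d)-1}$ standing in for the $D_j$ with $j\in J$. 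Applying Cartan's second main theorem to $F$ with these filtration hyperplanes, and summing the contributions over $J$ against the Nochka weights, converts the general-position exponent $q-n-1$ into the subgeneral exponent $q-2N+n-1$ for the original family and yields the asserted inequality, up to an error controlled by the gap between $h_V(m_0 d)$ and $\tfrac{\deg V}{n!}(m_0 d)^n$.

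The integer $m_0$ is then fixed by demanding that this $O(m_0^{n-1})$ Hilbert-polynomial error, once amplified by the Nochka combinatorics and by the factor $2N-n+1$, be bounded by $\epsilon$ times $\tfrac{\deg V}{n!}(m_0 d)^n$; the stated value $m_0=[4d^{n+1}q(2n+1)(2N-n+1)\deg V/\epsilon]+1$ is the smallest integer for which an elementary bound on the Hilbert error of $V\subset\C P^M$ purely in terms of $n$, $d$ and $\deg V$ secures this. The truncation level is read off the filtration: the highest power of $Q_j^{d/d_j}$ that can appear in any basis element of $H^0(V,\mathcal{O}_V(m_0 d))$ is bounded by the number of degree-$m_0$ monomials in $q$ variables minus one, namely $\binom{q+m_0-1}{m_0}-1$, and converting from the degree-$d$ normalization back to the actual degree $d_j$ of $Q_j$ gives $L_j\leq [d_j(\binom{q+m_0-1}{m_0}-1)/d+1]$. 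The main obstacle is the adapted Nochka weight lemma: one must verify that condition (ii) of Definition 1.1 is exactly what is needed to carry through the maximal-chain step, and that after the filtration the ``effective'' hyperplanes fall into a configuration whose $\omega$-weighted combinatorics really returns the coefficient $q-2N+n-1$.
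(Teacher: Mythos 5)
Your first paragraph (Nochka weights built from the intersections $V\cap\bigcap_{j\in J}D_j$, with condition (ii) of Definition 1.1 supplying the distinguished components $\sigma_J$) is exactly the paper's Section 2, and your overall scheme --- reduce to hyperplanes via degree-$m_0$ monomials in the $Q_j$'s, then choose $m_0$ so the error terms are $\leq\epsilon$ --- is also the paper's. The genuine gap is in the truncation step, which is precisely what Proposition \ref{P5} is about. You run the argument on the full linear system $H^0(V,\mathcal O_V(m_0d))$ and claim the truncation level is ``the highest power of $Q_j^{d/d_j}$ that can appear in any basis element'', bounded by $\binom{q+m_0-1}{m_0}-1$. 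This misidentifies the mechanism. In Wronskian-based truncation arguments (Fujimoto's Lemma \ref{L2.2} here, or Cartan's theorem), the truncation level is the number of linearly independent components of the auxiliary map $F$ minus one, i.e.\ the dimension of the linear system fed into the hyperplane Second Main Theorem; it is not a divisibility exponent (the largest power of a degree-$d$ form dividing a form of degree $m_0d$ is just $m_0$, and that number plays no role in truncation). With your choice of linear system the truncation level is $\dim H^0(V,\mathcal O_V(m_0d))-1$, and this is \emph{not} bounded by $\binom{q+m_0-1}{m_0}-1$: already for $V=\C P^n=\C P^M$, $q=n+1$ and $d>1$ one has $\dim H^0(V,\mathcal O_V(m_0d))=\binom{n+m_0d}{n}>\binom{n+m_0}{n}=\binom{q+m_0-1}{m_0}$. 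To get the stated bound one must restrict to the subsystem spanned by the degree-$m_0$ monomials in $Q_1^{d/d_1},\dots,Q_q^{d/d_q}$; equivalently, as the paper does, consider $\Phi=(Q_1^{d/d_1}:\cdots:Q_q^{d/d_q}):V\to Y\subset\C P^{q-1}$ and take $F$ linearly nondegenerate into the linear subspace $P$ of dimension $H_Y(m_0)-1$, so that the truncation level is $L=H_Y(m_0)-1\leq\binom{q+m_0-1}{m_0}-1$ and $L_j=[d_jL/d+1]$ follows by working with the $Q_j^{d/d_j}$ of common degree $d$.

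Moreover, even with the correct linear system the truncation cannot simply be ``read off'': Fujimoto's lemma bounds $\nu_{W(F)}$ below by $\sum_{j}\sum_{i\in J}I_{ij}\bigl(\nu_{Q_j(f)}-\nu^{[L]}_{Q_j(f)}\bigr)$ for one basis $J\in\mathcal L$ at a time, and converting this into $\frac{n+1}{m_0H_Y(m_0)}N_{W(F)}(r)\geq\sum_j\omega(j)\bigl(N_f(r,D_j)-N^{[L]}_f(r,D_j)\bigr)-\frac{\Theta\epsilon}{4}T_f(r)$, i.e.\ with the Nochka weights attached to each $D_j$, requires a \emph{second} application of the Hilbert-weight estimate (Lemma \ref{L2.1}), now with the weight vector $c_z=\bigl(\nu_{Q_j(f)}(z)-\nu^{[L]}_{Q_j(f)}(z)\bigr)_{1\le j\le q}$, combined with Theorem \ref{Th25} applied at the level of divisors --- this is the content of (\ref{3.10})--(\ref{a8}) and is the paper's main new device for effective truncation; it is absent from your plan. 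A final small point: your normalization of the Nochka constant is off --- with the identity $\sum_j\omega_j=\tilde\omega(q-2N+n-1)+n+1$ one has $\omega_j\leq\tilde\omega\leq\frac{n+1}{N+1}\leq1$ (Proposition \ref{P24}), not $\tilde\omega\geq1$, and the inequality $\omega(j)\leq\Theta$ is what allows passing from $\sum_j\omega(j)N^{[L_j]}_f(r,D_j)$ to $\Theta\sum_j N^{[L_j]}_f(r,D_j)$ in the last step of the proof.
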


The proof of Theorem \ref{Th4} consists of three parts: In the first part (chapter 2), we extend the Nochka weights from the case of hyperplanes to the case of hypersurfaces. In the second part (chapter 4 until (\ref{3.9})) we reduce the case of hypersurfaces to the case of hyperplanes.
 The method in this part is based on the work of Evertse - Ferretti \cite{EF2}, Nochka \cite{N}, and Ru \cite{R3}.
  In the last part, we obtain an effective truncation for the counting functions. For this we devellop
  a new method using Hilbert weights, which is, in particular,  different from the method which is used in the case of nondegenerate holomorphic  curves in a complex projective space (see Dethloff-Tan \cite{DT}).  
  
  We also note that the proof of our Second Main Theorem remains valid  if more generally the hypersurfaces have Nochka weights.\\

Let us finally give an example for the special case $V=\C P^2.$ We consider three  quadrics $\Gamma_1,\Gamma_2,\Gamma_3$  in $\C P^2$ such that they have one common point $A_1.$ Let $A_2, A_3$ be distinct points in  $\C P^2\setminus \cup_{i=1}^3\Gamma_i.$  Let $B_i\in \Gamma_i\setminus(\Gamma_u\cup\Gamma_v)\;(\{i,u,v\}=\{1,2,3\})$ such that the lines $B_iA_2, B_iA_3$ are distinct and do not pass through  any intersection point of any pair of curves in $\cup_{1\leq s\leq i-1}\{B_sA_2, B_sA_3\}\cup\{\Gamma_1,\Gamma_2,\Gamma_3\}.$ Take three distinct lines $L_1,L_2,L_3$ which do not pass through any intersection point  of any pair of curves in $\cup_{1\leq i\leq 3}\{B_iA_2, B_iA_3\}\cup\{\Gamma_1,\Gamma_2,\Gamma_3 \}$ and $L_1,L_2,L_3$ have the common point $A_4$ which does not belong to  any $\Gamma_i,$ $B_iA_2,B_iA_3$ $(i=1,2,3).$ Set $\mathcal G_1:=\{\Gamma_1, \Gamma_2,\Gamma_3\},\;\mathcal G_i:=\{A_iB_1,A_iB_2,A_iB_3\}\; (i=2,3),$ and $\mathcal G_4:=\{L_1,L_2,L_3\}.$
Then the curves in the set $\mathcal G:=\cup_{i=1}^4\mathcal G_i$  are in 3-subgeneral position in $\C P^2.$ Hence, by Theorem \ref{Th4}, for any algebraically nondegenerate holomorphic curve $f$ in $\C P^2$ and for any $\epsilon>0,$
\begin{align*}
\Big\Vert(7-\epsilon)T_f(r)\leq \sum_{D\in\mathcal G}\frac{1}{\deg D}N_f(r,D).
\end{align*}
On the other hand, we  can not get the above inequality from the Second Main Theorem for hypersurfaces in general position (Theorem \ref{Th3}). In fact, for any $\mathcal G'\subset\mathcal G$ such that the curves in $\mathcal G'$ are in general position, it is clear that   $\#(\mathcal G'\cap\mathcal G_i)\leq 2$ for all $1\leq i\leq 4.$ So, $\# \mathcal G'\leq 8.$ We write  $\mathcal G=\cup_{i=1}^s\mathcal G_i,$ such that $\mathcal G_i\cap\mathcal G_j=\varnothing \;(1\leq i<j\leq s)$ and for any $i\in\{1,\dots,s\}$ the curves in $\mathcal G_i$ are in general position. We have  $\#\mathcal G_1+\cdots+\#\mathcal G_s=12$ and $\#\mathcal G_i\leq 8,\;(i=1,\dots,s).$ By Theorem \ref{Th3}, we get
\begin{align*}
\Big\Vert(\#\mathcal G_i-3-\epsilon)T_f(r)\leq \sum_{D\in\mathcal G_i}\frac{1}{\deg D}N_f(r,D), (i=1,\dots s).
\end{align*}
So by summing up over any partition of $\mathcal G=\cup_{i=1}^s\mathcal G_i,$ since such a partition must have at least two elements, we get at most a term $\Big\Vert(6-\epsilon)T_f(r)$
on the left hand side.

\section{Nochka weights for hypersurfaces in subgeneral position}
In this section, we shall prove the existence of the  Nochka weights for hypersurfaces in subgeneral position which was proved by Nochka for the case of hyperplanes. We mainly follow the ideas of Chen \cite{Ch}, Nochka \cite{N}, Ru-Wong \cite{RW}, and Vojta \cite{V2}. However, we have to pass some difficulties due to the fact that their methods are based on properties of linear algebra.
We finally would like to remark that the existence of Nochka weights for the case of infinitely many hyperplanes has been established by N. Toda \cite{To}.

 Let $V \subset \C P^M$ be a smooth projective variety of dimension $n.$ Throughout of this section, we consider $q$ hypersurfaces  $D_1,\dots,D_q \subset \C P^M$ in $N$-subgeneral position in $V,$ where $N\geq n$ and $q\geq N+1.$ Set $Q:=\{1,\dots,q\},$ $\text{codim} \varnothing:=n+1,\;c(\varnothing):=0,$ and $c(R):=\text{codim} \big(V \cap (\cap_{j\in R}D_j)\big),$ where the codimension is taken with respect to $V$ and $\varnothing\ne R\subseteq Q.$  It is easy to see that
\begin{remark}\label{R1}
$(i)$ For any $K\subseteq Q$, we have $c(K)\leq \# K,$ moreover,  $c(K)=\# K$ if and only if $\# K\leq n+1$ and the hypersurfaces $D_j\;(j\in K)$ are in general position in $V$.

$(ii)$ For $K\subseteq K'\subseteq Q,$ if $c(K')=\# K'$ then $c(K)=\#K.$
\end{remark}
\begin{lemma}\label{L21}
Let  $ K\subseteq R\subseteq Q$ such that $\# K=c(K).$ Then  there exists a set $K'$ such that  $K\subseteq K'\subseteq R$ and $c(K')=\# K'=c(R).$
\end{lemma}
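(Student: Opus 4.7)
My plan is to construct $K'$ by inductively enlarging $K$. Set $K_0 := K$, and as long as $c(K_i) < c(R)$, I will exhibit some $\ell \in R \setminus K_i$ with $c(K_i \cup \{\ell\}) = c(K_i) + 1$ and put $K_{i+1} := K_i \cup \{\ell\}$. Because $c(K_i)$ strictly increases at each step and is bounded above by $c(R) \leq n+1$, the process terminates after finitely many steps at some $K' \subseteq R$ with $c(K') = c(R)$. Since each enlargement raises both $\#K_i$ and $c(K_i)$ by exactly one from the initial equality $\#K = c(K)$, we automatically get $\#K' = c(K')$ at the end. Everything hinges on always being able to produce such an $\ell$.

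To find $\ell$, I will invoke condition (ii) of Definition 1.1. At each step, the inductive hypothesis $\#K_i = c(K_i) < c(R) \leq n+1$ gives $\#K_i \leq n$; Remark \ref{R1}(i) then says that $\{D_j : j\in K_i\}$ are in general position in $V$, and since $c(K_i) \leq n$ the set $V\cap \bigcap_{j\in K_i}D_j$ is nonempty. The degenerate case $K_i = \emptyset$ is handled separately: any $\ell \in R$ works, because for the smooth $V$ with $V\not\subseteq D_\ell$ the intersection $V\cap D_\ell$ has codimension exactly one. For $K_i \neq \emptyset$, condition (ii) supplies an irreducible component $\sigma$ of $V\cap \bigcap_{j\in K_i}D_j$ of maximal dimension such that whenever $\ell \in Q \setminus K_i$ satisfies $c(K_i\cup\{\ell\}) = c(K_i)$, one must have $D_\ell \supseteq \sigma$.

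The core of the induction step is a contradiction argument: suppose no admissible $\ell$ exists in $R \setminus K_i$, i.e.\ $c(K_i\cup\{\ell\}) = c(K_i)$ for every $\ell \in R \setminus K_i$. Then $\sigma \subseteq D_\ell$ for all such $\ell$, so $\sigma \subseteq V\cap \bigcap_{j\in R}D_j$, which forces
\begin{align*}
\dim\bigl(V\cap \bigcap_{j\in R}D_j\bigr) \geq \dim \sigma = \dim\bigl(V\cap \bigcap_{j\in K_i}D_j\bigr),
\end{align*}
contradicting $c(R) > c(K_i)$. This yields the required $\ell$, completing the induction.

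The only real obstacle is verifying the three hypotheses of Definition 1.1(ii) at each step (cardinality bound $\leq n+1$, general position, nonempty intersection); all three fall out cleanly from $\#K_i = c(K_i) < c(R) \leq n+1$ together with Remark \ref{R1}(i). Once that is in place, the rest is a straightforward induction on the codimension, carried out entirely inside $R$.
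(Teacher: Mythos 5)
Your proof is correct and takes essentially the same route as the paper's: an inductive one-element-at-a-time enlargement whose key step assumes no admissible $\ell$ exists, applies condition (ii) of Definition 1.1 (justified via Remark \ref{R1}(i)) to obtain a single maximal-dimensional component $\sigma$ contained in every $D_\ell$ with $\ell\in R\setminus K_i$, and concludes $\sigma\subseteq V\cap(\cap_{j\in R}D_j)$, contradicting $c(K_i)<c(R)$. The paper handles the case $K=\varnothing$ by the same observation (codimension one from $V\not\subseteq D_i$), so the two arguments match in all essentials.
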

\begin{proof}
 We have $\# K=c(K)\leq c(R).$ If $c(K)=c(R),$ then the lemma is trivial by taking $K'=K.$ If $c(K)<c(R),$ by induction,  it suffices to show that there exists $i\in R\setminus K$ such that $c(K\cup\{i\})=\#K+1\;(=c(K)+1).$

  Suppose that  $c(K\cup\{i\})\ne \#K+1=c(K)+1$ for every $i\in R\setminus K.$ Then  $c(K\cup\{i\})=c(K)$ for all $i\in R.$ If $K = \varnothing$ this is a contradiction,
  either, in the case $R \not= \varnothing$,  to the hypothesis of $N$-subgeneral position (including $V \not\subset D_i$), or, if $R=\varnothing$, to the hypothesis $c(K)<c(R)$.
  If $K \not= \varnothing$ this means that  $\text{dim}\big(V \cap D_i \cap (\cap_{j\in K}D_j)\big) =\text{dim}\big(V \cap (\cap_{j\in K}D_j)\big)$ for all $i\in R.$ Therefore, since $\{D_j,\;j\in Q\}$ are in $N$-subgeneral position, there exists an irreducible component $\sigma_K $ of $V \cap (\cap_{j\in K}D_j)$ with $\text{dim}\;\sigma_K=\text{dim}\big( V \cap (\cap_{j\in K}D_j)\big)$ such that $D_i$ contains $\sigma_K$ for all $i\in R\setminus K.$    Hence, we get $\text{dim}\big( V \cap (\cap_{j\in R}D_j)\big)=\text{dim}\big(V \cap (\cap_{j\in K}D_j)\big).$ This means that $c(R)=c(K).$  This is a contradiction. This completes the proof of Lemma \ref{L21}.
\end{proof}
\begin{lemma}\label{L22}
$(i)$ For  any subsets $R_1, R_2\subseteq Q,$ we have $$c(R_1\cup R_2)+c(R_1\cap R_2)\leq c(R_1)+c(R_2).$$

$(ii)$ For any $S_1\subseteq S_2\subseteq Q,$ we have $\# S_1-c(S_1)\leq \# S_2-c(S_2).$ Furthermore, if $\#S_2\leq N+1$ then $\# S_2-c(S_2)\leq N-n.$
\end{lemma}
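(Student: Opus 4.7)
The overall plan is to combine Lemma \ref{L21} (which lets one ``upgrade'' any $K\subseteq R$ satisfying $\#K=c(K)$ to one realising $c(R)$) with the elementary one-step estimate $c(R\cup\{i\})-c(R)\in\{0,1\}$ coming from the dimension theory of hypersurface sections. Throughout, Remark \ref{R1} is used to pass freely between ``$\#K=c(K)$'' and ``the $D_j\;(j\in K)$ are in general position in $V$'' (in which case automatically $\#K\leq n+1$).

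For (i), I would build a nested chain of general-position extensions. First apply Lemma \ref{L21} with $K=\varnothing$ and $R=R_1\cap R_2$ to obtain $K\subseteq R_1\cap R_2$ with $\#K=c(K)=c(R_1\cap R_2)$; then apply it again to extend $K$ inside $R_1$ to $K_1$ with $\#K_1=c(K_1)=c(R_1)$; and once more to extend $K_1$ inside $R_1\cup R_2$ to $K'=K_1\cup T$ with $\#K'=c(K')=c(R_1\cup R_2)$, where $T:=K'\setminus K_1$. The crucial claim is $T\cap R_1=\varnothing$: any $t\in T\cap R_1$ would give $c(K_1\cup\{t\})=c(K_1)$ by the squeeze $c(K_1)\leq c(K_1\cup\{t\})\leq c(R_1)=c(K_1)$, but $K_1\cup\{t\}\subseteq K'$ and $K'$ is in general position by Remark \ref{R1}, so the same Remark forces $\{D_j:j\in K_1\cup\{t\}\}$ to be in general position, yielding $c(K_1\cup\{t\})=\#K_1+1$, a contradiction. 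Hence $T\subseteq R_2\setminus R_1$, so $K\cup T\subseteq R_2$ is a subset of the general-position set $K'$, hence itself in general position with $\#K+\#T=c(K\cup T)\leq c(R_2)$. Rearranging gives $c(R_1\cup R_2)=\#K_1+\#T\leq c(R_1)+c(R_2)-c(R_1\cap R_2)$.

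For the first half of (ii), I would prove the one-step bound and then induct on $\#(S_2\setminus S_1)$. Writing $W:=V\cap\bigcap_{j\in R}D_j$, the one-step bound is purely geometric: if $\dim W\geq 1$ then $W$ is a projective variety of positive dimension, so meets $D_i$, and every irreducible component of $W\cap D_i$ has codimension at most one in $W$ by the classical dimension-of-intersection inequality; if $\dim W=0$ the codimension either stays at $n$ or jumps to $n+1$; and if $W=\varnothing$ it stays at $n+1$. For the ``furthermore'' assertion, using $q\geq N+1$ and $\#S_2\leq N+1$, I extend $S_2$ to some $S_2^{*}\subseteq Q$ with $\#S_2^{*}=N+1$; condition~(i) of $N$-subgeneral position gives $V\cap\bigcap_{j\in S_2^{*}}D_j=\varnothing$, i.e.\ $c(S_2^{*})=n+1$, and the monotonicity just established yields
\begin{align*}
\#S_2-c(S_2)\leq \#S_2^{*}-c(S_2^{*})=(N+1)-(n+1)=N-n.
\end{align*}

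The main obstacle is the disjointness $T\cap R_1=\varnothing$ in part (i); this is the only step that uses the full force of the subgeneral-position hypothesis (through the general-position equivalence in Remark \ref{R1}), and it is what upgrades a naive codimension count into the genuine submodular inequality. Everything else reduces to bookkeeping with Lemma \ref{L21}, Remark \ref{R1}, and the one-step estimate.
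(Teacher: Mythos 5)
Your proof is correct. For part (i) you follow essentially the same route as the paper: the same chain $K\subseteq K_1\subseteq K'$ produced by three applications of Lemma \ref{L21}, the same contradiction via Remark \ref{R1} for the set $T=K'\setminus K_1$ (the paper proves the slightly weaker claim $T\subseteq R_2$ rather than $T\cap R_1=\varnothing$, but by the identical squeeze-plus-Remark-\ref{R1} mechanism), and the same final count $c(R_1\cup R_2)=\#K_1+\#T\leq c(R_1)+c(R_2)-c(R_1\cap R_2)$. Where you genuinely diverge is the first half of (ii): the paper stays combinatorial, applying Lemma \ref{L21} once more to get $S'_1\subseteq S'_2$ with $\#S'_v=c(S'_v)=c(S_v)$, then showing $(S'_2\setminus S'_1)\cap S_1=\varnothing$, hence $S'_2\setminus S'_1\subseteq S_2\setminus S_1$, which gives $c(S_2)-c(S_1)\leq \#S_2-\#S_1$; you instead prove the geometric one-step estimate $c(R\cup\{i\})\leq c(R)+1$ (projective dimension theorem, with the three cases $\dim W\geq 1$, $\dim W=0$, $W=\varnothing$) and induct on $\#(S_2\setminus S_1)$. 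Your route is more elementary and makes visible that the monotonicity of $\#S-c(S)$ holds for completely arbitrary hypersurfaces, with no position hypothesis whatsoever, while the paper's route recycles the Lemma \ref{L21} machinery and so keeps the whole section purely combinatorial once that lemma is in place. The ``furthermore'' clause is handled identically in both proofs. One small correction to your closing commentary: the disjointness step in (i) is not where the subgeneral-position hypothesis enters --- that step uses only Remark \ref{R1} and monotonicity of $c$; the position hypotheses actually enter through Lemma \ref{L21} itself (whose proof invokes condition (ii) of Definition 1.1) and through condition (i) when you force $c(S_2^{*})=n+1$ in the ``furthermore'' part. This mislabeling does not affect the validity of any step.
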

\begin{proof}
Proof of $(i)$: By Lemma \ref{L21}, there exist subsets $K, K_1, K_3$ with  $K\subseteq R_1\cap R_2,$ $K\subseteq K_1\subseteq R_1,$ $K_1\subseteq K_3\subseteq R_1\cup R_2,$ such that
\begin{align*}
\# K=c(K)&=c(R_1\cap R_2),\; \# K_1=c(K_1)=c(R_1),\;\text{and}\\
&\# K_3= c(K_3)=c(R_1\cup R_2).
\end{align*}
   Set $K_2:=K_3\setminus K_1.$ Then $K_2\subseteq R_2.$ Indeed, otherwise there exists $i\in K_2\setminus R_2.$ Then $i\in R_1\setminus K_1.$ Therefore, $K_3\supseteq K_1\cup\{i\}\subseteq R_1.$  This implies that $c(R_1)\geq c(K_1\cup\{i\})=\#(K_1\cup\{i\})=c(K_1)+1=c(R_1)+1$ by Remark \ref{R1}. This is a contradiction. Hence, $K_2\subseteq R_2.$ Therefore, $K_2\cup K\subseteq R_2.$ On the other hand $K_2\cup K\subseteq K_3,$ and $K_2\cap K\subseteq K_2\cap K_1=(K_3\setminus K_1)\cap K_1=\varnothing.$ From these facts and by Remark \ref{R1} (ii) we get $c(R_2)\geq c(K_2\cup K)=\#(K_2\cup K)=\# K_2+\# K=(\#K_3-\# K_1)+\# K=c(R_1\cup R_2)-c(R_1)+c(R_1\cap R_2).$ Hence, the assertion (i) holds.

Proof of $(ii)$: By Lemma \ref{L21},  there exist  $S'_v\; (v=1,2)$ such that $S'_v\subseteq S_v,\; S'_1\subseteq S'_2$ and $\# S'_v=c(S'_v)=c(S_v).$ We have $(S'_2\setminus S'_1)\cap S_1=\varnothing.$ Indeed, otherwise there exists $i\not\in S'_1$ such that $S'_2\supseteq S'_1\cup\{i\}\subseteq S_1. $ Therefore, by Remark \ref{R1} (ii) we get $c(S_1)+1=\# S'_1+1= c(S'_1\cup\{i\})\leq c(S_1).$ This is a contradiction. Hence, $(S'_2\setminus S'_1)\cap S_1=\varnothing.$ Thus we have $S'_2\setminus S'_1\subseteq S_2\setminus S_1.$ Therefore, $c(S_2)-c(S_1)=\# S'_2-\#S'_1=\#(S'_2\setminus S'_1)\leq \#(S_2\setminus S_1)=\# S_2- \# S_1.$

If $\# S_2\leq N+1, $ then we choose $S_3$ such that $S_2\subseteq S_3\subseteq Q$ and $\#S_3=N+1.$ Since $D_j\; (j\in Q)$ are in $N$-subgeneral position, we have $c(S_3)=n+1.$ Therefore, $\#S_2-c(S_2)\leq \#S_3-c(S_3)=N-n.$
\end{proof}
For  $R_1\subsetneq R_2\subseteq Q,$ we set $\rho(R_1, R_2)=\frac{c(R_2)-c(R_1)}{\# R_2-\#R_1}.$ Then by Lemma \ref{L22}, we have $0\leq \rho(R_1,R_2)\leq 1.$
\begin{lemma}\label{L23}
Let $D_1,\dots,D_q$ be hypersurfaces in $N$-subgeneral position in $V,$  where $N\geq n$ and $q\geq 2N-n+1.$ Then, there exists a sequence of subsets $\varnothing:=R_0\subsetneq R_1\subsetneq\cdots\subsetneq R_s\subseteq Q:=\{1,\dots,q\}$ $(s\geq 0)$  satisfying the following conditions:

$(i)$ $c(R_s)<n+1,$

$(ii)$ $0<\rho(R_0, R_1)<\rho(R_1, R_2)<\cdots <\rho(R_{s-1}, R_s)<\frac{n+1-c(R_s)}{2N-n+1-\#R_s},$

$(iii)$ for any $R$ with $R_{i-1}\subsetneq R\subseteq Q\;(1\leq i\leq s),$ and $c(R_{i-1})<c(R)<n+1,$ we have that $\rho(R_{i-1}, R_i)\leq \rho(R_{i-1},R)$ and, moreover, if $\rho(R_{i-1}, R_i)=\rho(R_{i-1},R)$ then $\# R\leq\# R_i.$

$(iv)$ for any  $R$ with  $R_s\subsetneq R\subseteq Q,$  if  $c(R_s)<c(R)<n+1,$ then $\rho(R_s, R)\geq\frac{n+1-c(R_s)}{2N-n+1-\#R_s}.$
\end{lemma}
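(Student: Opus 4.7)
The plan is to construct the chain inductively by a greedy procedure, in the spirit of the classical Nochka construction for hyperplanes. Set $R_0 := \varnothing$. Having chosen $R_{i-1}$ with $c(R_{i-1}) < n+1$, consider the finite family
$\mathcal{F}_i := \{ R \subseteq Q : R_{i-1} \subsetneq R,\ c(R_{i-1}) < c(R) < n+1\}$.
If $\mathcal{F}_i$ is empty, or if $\min_{R \in \mathcal{F}_i} \rho(R_{i-1}, R) \ge \frac{n+1-c(R_{i-1})}{2N-n+1-\#R_{i-1}}$, then terminate with $s := i-1$; otherwise let $R_i \in \mathcal{F}_i$ attain this minimum and, among all such minimizers, have maximal cardinality. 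The procedure terminates in at most $n+1$ steps since $c(R_{i-1}) < c(R_i) < n+1$ at each step.

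Conditions (i), (iii) and (iv) follow essentially directly from this definition: (i) is the termination rule, the $R$ appearing in (iii) is precisely a member of $\mathcal{F}_i$ so the assertion restates the minimality and the tie-breaking in the choice of $R_i$, and (iv) is the reason the procedure stopped at step $s+1$. To ensure the right-hand side in (iv) is well-defined, one uses Lemma \ref{L22}(ii) together with $N$-subgeneral position to deduce $\#R_s \le N$, so that $2N-n+1-\#R_s \ge N-n+1 > 0$.

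The main point is condition (ii). The positivity $\rho(R_0, R_1) > 0$ is clear since $c(R_1) \ge 1$. For the strict monotonicity $\rho(R_{i-1}, R_i) < \rho(R_i, R_{i+1})$ at $i < s$, I would argue by contradiction. Assuming $\rho(R_{i-1}, R_i) \ge \rho(R_i, R_{i+1})$ and observing that $R_{i+1} \in \mathcal{F}_i$ (since $c(R_{i-1}) < c(R_i) < c(R_{i+1}) < n+1$), the minimality of $R_i$ gives $\rho(R_{i-1}, R_i) \le \rho(R_{i-1}, R_{i+1})$, while $\rho(R_{i-1}, R_{i+1})$ is the weighted mediant of $\rho(R_{i-1}, R_i)$ and $\rho(R_i, R_{i+1})$ and therefore lies between them, forcing $\rho(R_{i-1}, R_{i+1}) \le \rho(R_{i-1}, R_i)$. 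Hence equality holds, and the maximum-cardinality tie-breaking yields $\#R_{i+1} \le \#R_i$, contradicting $R_i \subsetneq R_{i+1}$. For the final upper bound, the step-$s$ progress condition reads $\rho(R_{s-1}, R_s) < \frac{n+1-c(R_{s-1})}{2N-n+1-\#R_{s-1}}$, and since $\frac{n+1-c(R_{s-1})}{2N-n+1-\#R_{s-1}}$ is the mediant of $\rho(R_{s-1}, R_s) = \frac{c(R_s)-c(R_{s-1})}{\#R_s-\#R_{s-1}}$ and $\frac{n+1-c(R_s)}{2N-n+1-\#R_s}$, the elementary mediant inequality $\bigl(\tfrac{p}{q} < \tfrac{p+r}{q+s} \Rightarrow \tfrac{p}{q} < \tfrac{r}{s}$ for positive denominators$\bigr)$ delivers $\rho(R_{s-1}, R_s) < \frac{n+1-c(R_s)}{2N-n+1-\#R_s}$.

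The main obstacle is the strict monotonicity in (ii), which genuinely needs both the minimality and the maximum-cardinality tie-breaking in the choice of the $R_i$'s; the remaining conditions reduce to bookkeeping via Lemma \ref{L22} (submodularity of $c$ and the bound $\#S - c(S) \le N-n$), with some care in the boundary cases $s = 0$ and $i = 1$, where $R_0 = \varnothing$ and $c(R_0) = 0$ make $\mathcal{F}_1$ the set of all nonempty $R \subseteq Q$ with $c(R) < n+1$.
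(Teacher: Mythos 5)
Your proof is correct and follows essentially the same route as the paper's: a greedy extension procedure choosing each $R_i$ to minimize $\rho(R_{i-1},\cdot)$ with maximal-cardinality tie-breaking, stopping exactly when (iv) holds, with (i), (iii), (iv) read off from the construction and (ii) derived from the minimality, the tie-breaking, and the elementary mediant/ratio inequality (the paper's inequality (\ref{no2})). The only cosmetic difference is that the paper builds the threshold $\rho(R_s,R)<\frac{n+1-c(R_s)}{2N-n+1-\#R_s}$ into its candidate family $\mathcal R$ and proves monotonicity directly rather than by contradiction, which changes nothing of substance.
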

\begin{proof}
We start the proof by setting $R_0=\varnothing.$ It suffices to show that, under the assumption that there is a sequence $\varnothing:=R_0\subsetneq R_1\subsetneq\cdots\subsetneq R_s\subseteq Q$ satisfying  conditions (i), (ii) and (iii), it satisfies also the condition (iv) or, otherwise, there exists a subset $R_{s+1}$ such that the sequence $\varnothing:=R_0\subsetneq R_1\subsetneq\cdots\subsetneq R_{s+1}\subseteq Q:=\{1,\dots,q\}$ satisfies conditions (i), (ii) and (iii). In fact, if the latter case occurs, we can reach the desired conclusion after finitely many repetitions of these constructions.

 We now consider a sequence $\varnothing:=R_0\subsetneq R_1\subsetneq\cdots\subsetneq R_s\subseteq Q$  satisfying condition (i), (ii) and (iii). Assume that this sequence does not satisfy the condition (iv).
Set $\mathcal R:=\{R:\; R_s\subsetneq R\subseteq Q, c(R_s)<c(R)<n+1,\;\text{and}\; \rho(R_s, R)<\frac{n+1-c(R_s)}{2N-n+1-\#R_s}\}.$
 Then, we have $\mathcal R\ne\varnothing.$ Set $\rho:=\min\{\rho(R_s, R):\; R\in\mathcal R\}.$ We choose a set $R_{s+1}$ in $\mathcal R$  such that $\rho(R_{s}, R_{s+1})=\rho$ and $\# R_{s+1}$ is as big as possible.

We now prove that the sequence $R_0\subsetneq R_1\subsetneq\cdots\subsetneq R_{s+1}$ satisfies  conditions (i), (ii) and (iii).

$*$  It is clear that  $c(R_{s+1})<n+1,$ since $R_{s+1}\in \mathcal R.$

$*$ If $s\geq 1$, we have  $R_{s-1}\subsetneq R_{s+1}\subseteq Q,\; c(R_{s-1})\leq c(R_s)<c(R_{s+1})<n+1,$  and $\# R_{s+1}>\#R_{s}.$ Therefore, since the sequence  $R_0\subsetneq \cdots\subsetneq R_s$ satisfies the condition (iii), we have
 \begin{align}\label{no1}
 \rho(R_{s-1},R_s)<\rho(R_{s-1},R_{s+1}).
  \end{align}
On the other hand, for any $0\leq a\leq c,\; 0<b<d$ such that $\frac{a}{b}<\frac{c}{d},$ we have
\begin{align}\label{no2}
 \frac{a}{b}<\frac{c-a}{d-b}.
  \end{align}
Therefore, by (\ref{no1}) we have $\rho(R_{s-1},R_s)<\rho(R_{s},R_{s+1}).$
And if $s=0$, then we have $\rho (R_0,R_1) = \rho (\varnothing, R_1) =
\frac{c(R_1)}{\#R_1}>0$.

 \noindent Since $R_{s+1}\in \mathcal R,$ we get $\rho(R_s, R_{s+1})=\frac{c(R_{s+1})-c(R_s)}{\#R_{s+1}-\#R_s}<\frac{n+1-c(R_s)}{2N-n+1-\#R_s}.$ Hence, in both cases, by using the property (\ref{no2}), we get $\rho(R_s, R_{s+1})<\frac{n+1-c(R_{s+1})}{2N-n+1-\#R_{s+1}}$
 (observing that, by the hypothesis of $N$-subgeneral position in $V$, we get from $c(R_{s+1})<n+1$ that  $\#R_{s+1} \leq N< 2N-n+1$).

$*$ Let $R$ (if there exists any) such that $R_{s}\subsetneq R\subseteq Q$ and $c(R_{s})<c(R)<n+1.$ If $\rho(R_s, R)\geq\frac{n+1-c(R_s)}{2N-n+1-\#R_s},$ then $\rho(R_s, R_{s+1})=\rho<\rho(R_s, R).$ If $\rho(R_s, R)<\frac{n+1-c(R_s)}{2N-n+1-\#R_s},$ then $R\in\mathcal R.$  Therefore, by our choice of $R_{s+1}$ we have that $\rho(R_s, R_{s+1})\leq\rho(R_s, R),$ furthermore, if $\rho=\rho(R_s, R_{s+1})=\rho(R_s, R)$ then $\#R\leq \# R_{s+1}.$

From theses facts, we get that the sequence $R_0\subsetneq R_1\subsetneq\cdots\subsetneq R_{s+1}$ satisfies  conditions (i), (ii) and (iii). This completes the proof of Lemma \ref{L23}.
\end{proof}
\begin{proposition}\label{P24}
Let $D_1,\dots, D_q$ be hypersurfaces  in $N$-subgeneral position in $V$, where $N\geq n$ and  $q\geq 2N-n+1.$ Then, there exist constants $\omega(1),\dots,\omega(q)$ and $\Theta$ satisfying the following conditions:

$(i)$ $0<\omega(j)\leq\Theta\leq 1\;(1\leq j\leq q),$

$(ii)$ $\sum_{j=1}^q\omega(j)=\Theta (q-2N+n-1)+n+1,$

$(iii)$ $\frac{n+1}{2N-n+1}\leq\Theta\leq\frac{n+1}{N+1},$

$(iv)$ if $ R\subseteq Q$ and $0<\# R\leq N+1,$ then $\sum_{j\in R}\omega(j)\leq c(R).$
\end{proposition}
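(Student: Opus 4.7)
The plan is to construct the weights explicitly from the chain supplied by Lemma~\ref{L23}. First I would apply Lemma~\ref{L23} to obtain a chain $\varnothing = R_0 \subsetneq R_1 \subsetneq \cdots \subsetneq R_s \subseteq Q$ satisfying its conditions (i)--(iv). Then I set
$$\Theta := \frac{n+1-c(R_s)}{2N-n+1-\#R_s},$$
which is well-defined and positive: $c(R_s) < n+1$ by Lemma~\ref{L23}(i), and then $N$-subgeneral position forces $\#R_s \leq N < 2N-n+1$. Define the weights by
$$\omega(j) := \rho(R_{i-1},R_i) \text{ for } j \in R_i \setminus R_{i-1},\ 1 \leq i \leq s, \qquad \omega(j) := \Theta \text{ for } j \in Q \setminus R_s.$$

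Conditions (i)--(iii) should all follow from direct calculation. Property (ii) is a telescoping computation: one obtains $\sum_{j=1}^q \omega(j) = c(R_s) + \Theta(q - \#R_s)$, and by the definition of $\Theta$ this equals $\Theta(q - 2N + n - 1) + (n+1)$. Positivity of each weight and the inequality $\omega(j) \leq \Theta$ in (i) are immediate from the strict inequalities in Lemma~\ref{L23}(ii). For the lower bound in (iii), I would observe that $c(R_s)/\#R_s$ (when $s \geq 1$) is a weighted average of the ratios $\rho(R_{i-1},R_i)$, each strictly less than $\Theta$, so $c(R_s)/\#R_s < \Theta$; the elementary equivalence $a/b < (c-a)/(d-b) \iff a/b < c/d$ already used in the proof of Lemma~\ref{L23} then gives $c(R_s)/\#R_s < (n+1)/(2N-n+1)$, and hence $\Theta \geq (n+1)/(2N-n+1)$ (with equality when $s = 0$). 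The upper bound $\Theta \leq (n+1)/(N+1)$, which also yields $\Theta \leq 1$ since $N \geq n$, follows from the estimate $\#R_s - c(R_s) \leq N - n$ supplied by Lemma~\ref{L22}(ii), after a short algebraic rearrangement.

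The main obstacle is property (iv). I plan to prove it by induction on $\#R$, exploiting the submodularity of $c$ (Lemma~\ref{L22}(i)) together with the extremality properties (iii) and (iv) of the chain. For $R$ with $R \not\subseteq R_s$, submodularity applied to $R$ and $R_s$ yields $c(R) \geq c(R \cap R_s) + c(R \cup R_s) - c(R_s)$; when $c(R \cup R_s) < n+1$, Lemma~\ref{L23}(iv) gives $c(R \cup R_s) - c(R_s) \geq \Theta \cdot \#(R \setminus R_s)$, and the inductive hypothesis applied to the strictly smaller set $R \cap R_s$ closes the step, since $\omega(j) = \Theta$ on $R \setminus R_s$. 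For $R \subseteq R_s$, I would choose the smallest index $i$ with $R \subseteq R_i$ and run the analogous argument with $R_{i-1}$ in place of $R_s$, now invoking Lemma~\ref{L23}(iii) instead of (iv). The delicate sub-cases are those where $c(R \cup R_s) = c(R_s)$ (so Lemma~\ref{L23}(iv) is vacuous) or where $c(R \cup R_s) = n+1$; I expect to handle these by peeling off elements of $R \setminus R_s$ one at a time, repeatedly applying the previous cases, and absorbing the residue via the inequalities $\Theta \leq (n+1)/(N+1)$ and $\#R \leq N+1$, which together ensure that when $\#R = N+1$ (forcing $c(R) = n+1$ by $N$-subgeneral position) the sum $\sum_{j \in R} \omega(j)$ is bounded by $(N+1)\Theta \leq n+1 = c(R)$. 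This combinatorial bookkeeping, where the structural hypothesis of $N$-subgeneral position from Definition~1.1(ii) interacts with the extremality of the Lemma~\ref{L23} chain, is the main technical point of the proof.
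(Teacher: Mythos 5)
Your construction of the weights is identical to the paper's (in the paper's notation $\omega(j)=\Theta$ also for $j\in R_{s+1}\setminus R_s$, so the two definitions agree), and your verifications of (i)--(iii) are correct; for the lower bound in (iii) you argue via the chain rather than, as the paper does, deducing it from (i) and (ii), but both routes work. The genuine gap is in (iv), and it sits exactly in the two sub-cases you defer. Consider first the sub-case $c(R\cup R_s)=c(R_s)$ (and its analogue $c(R\cup R_{i-1})=c(R_{i-1})$ in your $R\subseteq R_s$ branch). Peeling off $j_0\in R\setminus R_s$ and invoking the inductive hypothesis gives $\sum_{j\in R}\omega(j)\le \Theta+c(R\setminus\{j_0\})$, which is $\le c(R)$ only if $c(R\setminus\{j_0\})<c(R)$; nothing guarantees that deleting one element drops the codimension, so the induction does not close. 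What is actually true --- and this is the paper's key claim (\ref{no3}) --- is that this sub-case cannot occur at all: if $\#(R\cap R_i)>\#(R\cap R_{i-1})$ then $c\big((R\cap R_i)\cup R_{i-1}\big)>c(R_{i-1})$. Its proof is the one place where the ``moreover'' clause of Lemma~\ref{L23}(iii) (among minimizers of $\rho(R_{i-2},\cdot)$ the set $R_{i-1}$ has maximal cardinality) is used: if the codimension did not increase, the set $(R\cap R_i)\cup R_{i-1}$ would realize a ratio $\rho(R_{i-2},\cdot)$ at most $\rho(R_{i-2},R_{i-1})$ while being strictly larger than $R_{i-1}$, contradicting Lemma~\ref{L23}(iii) at the previous step. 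Your sketch never invokes this maximality clause, and without it this sub-case cannot be ruled out or handled.

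The second sub-case, $c(R\cup R_s)=n+1$, is also not covered. Your absorption argument treats only the endpoint $\#R=N+1$, where $c(R)=n+1$. When $c(R)<n+1$ your stated tools are insufficient: $\Theta\le (n+1)/(N+1)$ combined with $\#R\le c(R)+N-n$ yields $\Theta\,\#R\le c(R)$ only when $c(R)\ge n+1$, and the inductive route would require $\#(R\cup R_s)\le 2N-n+1$, which can fail since one only knows $\#(R\cup R_s)\le (N+1)+N$. The paper's Case 2 is a direct, non-inductive computation using two facts absent from your sketch: submodularity applied to $R$ and $R_s$ gives $c(R)\ge n+1-c(R_s)$, and then the exact value $\Theta=\frac{n+1-c(R_s)}{2N-n+1-\#R_s}$ (not merely the bound $(n+1)/(N+1)$), together with $\#R_s-c(R_s)\le N-n$ from Lemma~\ref{L22}(ii), gives $\Theta(c(R)+N-n)\le c(R)$. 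These two sub-cases are precisely where the substance of the paper's proof of (iv) lies, so the proposal as it stands has a real gap.
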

\begin{proof}
If $N=n,$ then $\omega(1)=\cdots=\omega(q)=1$ and $\Theta=1$ satisfy the conditions (i) to (iv). Assume that $N>n.$ Let $\{R_i\}_{i=0}^s$ be a sequence of subsets of $Q:=\{1,\dots,q\}$ satisfying the conditions (i) to (iv) of Lemma \ref{L23}.
By Lemma \ref{L23} (i) and by the ``N-subgeneral position" condition, we have 
\begin{align}\label{non}
\#R_s\leq N.
\end{align} 
Take a subset $R_{s+1}$ of $Q$ such that $\# R_{s+1}=2N-n+1\geq N+1$ and, hence,  $R_s\subsetneq R_{s+1}.$ Then we have $c(R_{s+1})=n+1.$

Set
\begin{equation*}
\Theta:=\rho(R_s,R_{s+1})=\frac{n+1-c(R_s)}{2N-n+1-\# R_s},\;\text{and}
\end{equation*}
\[\omega(j):=\begin{cases}\rho(R_{i},R_{i+1})&\mbox{if}\; j\in R_{i+1}\setminus R_{i} \;\mbox{for some\;} i\; \mbox{with}\; 0\leq i\leq s,\\
                       \Theta &\mbox{if}\; j\not\in R_{s+1}.\\
\end{cases}\]
By Lemma \ref{L23} (ii),  $\{\omega(j)\}_{j=1}^q$ and $\Theta$ satisfy the condition (i) of Proposition \ref{P24}.

We have
\begin{align*}
\sum_{j=1}^q\omega(j)&=\sum_{j\in Q\setminus R_{s+1}}\omega(j)+\sum_{i=0}^s\sum_{j\in R_{i+1}\setminus R_{i}}\omega(j)\\
&=\Theta(q-2N+n-1)+\sum_{i=0}^s\big(c(R_{i+1})-c(R_i)\big)\\
&=\Theta(q-2N+n-1)+c(R_{s+1})\\
&=\Theta(q-2N+n-1)+n+1.
\end{align*}
Thus, $\{\omega(j)\}_{j=1}^q$ and $\Theta$ satisfy the condition (ii) of Proposition \ref{P24}.

We next check the condition (iii). By (i) and (ii), we have
\begin{align*}
n+1=\sum_{j=1}^q\omega(j)-\Theta(q-2N+n-1)\leq\Theta(2N-n+1).
\end{align*}
By Lemma \ref{L22} (ii) we have
\begin{align*}
\Theta=\frac{n+1-c(R_{s})}{N+1+(N-n-\# R_s)}\leq\frac{n+1-c(R_s)}{N+1-c(R_s)}\leq\frac{n+1}{N+1}.
\end{align*}

Finally we check the condition (iv). Take an arbitrary subset $R$ of $Q$ with $0<\#R\leq N+1.$

{\bf Case 1: } $c(R\cup R_s)\leq n.$

Set
\[R'_i:=\begin{cases}R\cap R_i\;&\mbox{if}\; 0\leq i\leq s,\\
R\;&\mbox{if}\;i=s+1.\end{cases}\]
 We now prove that: for any $i\in\{1,\dots,s+1\},$ if $\# R'_{i}>\# R'_{i-1}$ then
 \begin{align}\label{no3}
 c(R'_i\cup R_{i-1})>c(R_{i-1})
 \end{align}
 and
\begin{align}\label{no4}
 \rho(R_{i-1}, R_{i})\leq \rho(R'_{i-1}, R'_{i}).
\end{align}

$*$ If $i=1$ then $c(R'_1\cup R_0)=c(R'_1)>0=c(R_0)$ (note that $R'_1\ne\varnothing,$ since $\# R'_1>\# R'_0=0).$

$*$ If $i\geq2,$ then since $\# R'_{i}>\# R'_{i-1},$ we have $\# (R'_{i}\cup R_{i-1})>\# R_{i-1}.$ On the other hand $c(R_{i-2})<c(R_{i-1})\leq c(R'_{i}\cup R_{i-1})\leq c(R\cup R_s)\leq n$ (note that $\rho(R_{i-2}, R_{i-1})>0).$ Therefore, by Lemma \ref{L23}, (iii) we have $\rho(R_{i-2}, R_{i-1})<\rho(R_{i-2}, R'_i\cup R_{i-1}).$
This means that
\begin{align*}
\frac{c(R_{i-1})-c(R_{i-2})}{\# R_{i-1}-\# R_{i-2}}<\frac{c(R'_i\cup R_{i-1})-c(R_{i-2})}{\# (R'_i\cup R_{i-1})-\# R_{i-2}}.
\end{align*}
Therefore, since $\# R_{i-1}<\# (R'_{i}\cup R_{i-1}),$ we have $c(R_{i-1})<c(R'_i\cup R_{i-1}).$ We get (\ref{no3}).

We next prove (\ref{no4}). By (\ref{no3}), we have $c(R_{i-1})<c(R'_i\cup R_{i-1})\leq c(R\cup R_s)\leq n.$ Hence, by Lemma \ref{L23}, (iii) for the case $1\leq i\leq s$ and (iv) for the case $i=s+1,$ we have
\begin{align*}
\rho(R_{i-1}, R_i)\leq \rho(R_{i-1},R'_i\cup R_{i-1})\;\text{for all}\; i\in\{1,\dots,s+1\},
\end{align*}
(note that $\rho(R_s,R_{s+1})=\frac{n+1-c(R_s)}{2N-n+1-\# R_s}).$

\noindent Therefore, by Lemma \ref{L22}, (i) we have
\begin{align*}
\rho(R_{i-1}, R_i)&\leq \rho(R_{i-1},R'_i\cup R_{i-1})\\
&=\frac{c(R'_i\cup R_{i-1})-c(R_{i-1})}{\#(R'_i\cup R_{i-1})-\#R_{i-1}}\leq\frac{c(R'_i)-c(R'_i\cap R_{i-1})}{\#(R'_i\cup R_{i-1})-\# R_{i-1}}\\
&=\frac{c(R'_i)-c(R'_{i-1})}{\# R'_i-\# (R'_{i}\cap R_{i-1})}=\frac{c(R'_i)-c(R'_{i-1})}{\# R'_i-\# R'_{i-1}}\\
&=\rho(R'_{i-1}, R'_i),
\end{align*}
(note that $R'_{i-1}=R'_i\cap R_{i-1}$). We get (\ref{no4}).

By (\ref{no4}), we   get that
\begin{align}\label{no5}
\omega(j)\leq \rho(R'_{i-1}, R'_i)\;\text{for all}\; j\in R'_i\setminus R'_{i-1}\; (1\leq i\leq s+1).
\end{align}
In fact, for $j\in R'_{s+1}\setminus R'_s$ we have
$\omega(j)\leq\Theta=\rho(R_s,R_{s+1})\leq \rho(R'_s,R'_{s+1}),$ and for $j\in R'_i\setminus R'_{i-1}\subseteq R_i\setminus R_{i-1}\; (1\leq i\leq s)$ we have $\omega(j)=\rho(R_{i-1}, R_i)\leq \rho(R'_{i-1},R'_i).$

 By (\ref{no5}), we have
\begin{align*}
\sum_{j\in R}\omega(j)&=\sum_{i=1}^{s+1}\sum_{j\in R'_i\setminus R'_{i-1}}\omega(j)\\
&\leq\sum_{i=1}^{s+1}(\# R'_i-\# R'_{i-1})\cdot\rho(R'_{i-1},R'_i)\\
&=c(R'_{s+1})-c(R'_0)=c(R).
\end{align*}
Therefore, the assertion  (iv) holds in this case.

{\bf Case 2:} $c(R\cup R_s)=n+1.$
\noindent By Lemma \ref{L22}, and since $\# R\leq N+1,$ we have
\begin{align*}
&\# R\leq c(R)+N-n,\;\text{and}\\
 &n+1-c(R_s)=c(R\cup R_s)-c(R_s)\leq c(R)-c(R\cap R_s)\leq c(R).
\end{align*}
Therefore, by the assertion (i), by the definition of $\Theta$ and by Lemma \ref{L22} (ii), 
applied to $R_s$ and by using $(\ref{non})$, we have
\begin{align*}
\sum_{j\in R}\omega(j)\leq\Theta\#R&\leq\Theta(c(R)+N-n)\\
&=\Theta c(R)\big(1+\frac{N-n}{c(R)}\big)\\
&\leq \Theta c(R)\big(1+\frac{N-n}{n+1-c(R_s)}\big)\\
&=c(R)\frac{N+1-c(R_s)}{2N-n+1-\#R_s}\\
&\leq c(R).
\end{align*}
This completes the proof of Proposition \ref{P24}.
\end{proof}
\begin{definition}
We call constants $\omega(j)\;(1\leq j\leq q)$ respectively $\Theta$ with the properties (i) to (iv) in Proposition \ref{P24}   Nochka weights respectively  Nochka constant for hypersurfaces $D_1,\dots,D_q$  in $N$-subgeneral position in $V$, where $N\geq n$ and  $q\geq 2N-n+1.$ 
\end{definition}
\begin{theorem}\label{Th25}
Let $D_1,\dots, D_q$ be hypersurfaces in $N$-subgeneral position in $V$ and $\omega(1),\dots,\omega(q)$ be Nochka weights for them, where $N\geq n$ and $q\geq 2N-n+1.$ Consider an arbitrary subset R of $Q:=\{1,\dots,q\}$ with $0<\# R\leq N+1$ and $c^*:=c(R),$ and arbitrary nonnegative real constants $E_1,\dots,E_q.$ Then, there exist $j_1,\dots,j_{c^*}\in R$ such that  the hypersurfaces $D_{j_1},\dots,D_{j_{c^*}}$ are in general position and
\begin{align*}
\sum_{j\in R}\omega(j)E_j\leq\sum_{i=1}^{c^*}E_{j_i}.
\end{align*}
\end{theorem}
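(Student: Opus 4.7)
\medskip
\noindent\textbf{Proof sketch.} My plan is to relabel the indices of $R$ so that the values of $E_j$ are non-increasing, and then run a greedy procedure: walk through $R$ in this order and, at step $k$, adjoin $i_k$ to a growing set $J_{k-1}$ precisely when doing so strictly increases $c(J_{k-1})$. The resulting set $J:=J_t$ will yield the desired indices $\{j_1,\ldots,j_{c^*}\}$, and the inequality will follow from Abel summation combined with Proposition \ref{P24}(iv).

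The core technical step is the identity
$$c(R_k) = \#(J \cap R_k) = c(J \cap R_k) \qquad (0 \leq k \leq t),$$
where $R_k := \{i_1,\ldots,i_k\}$ is the prefix after relabeling and $t := \#R$. I plan to prove this by induction on $k$. In the case where $i_k$ \emph{is} adjoined, the identity follows from the greedy rule together with monotonicity (Lemma \ref{L22}(ii)) and the one-step submodular estimate $c(R_k) \leq c(R_{k-1})+1$. The nontrivial case is when $i_k$ is skipped, where I must rule out that $c(R_k) > c(R_{k-1})$. This is where submodularity enters: applying Lemma \ref{L22}(i) to the pair $R_{k-1}$ and $J_{k-1} \cup \{i_k\}$ (whose intersection is $J_{k-1}$ and whose union is $R_k$), combined with the hypothesis $c(J_{k-1} \cup \{i_k\}) = c(J_{k-1})$, immediately forces $c(R_k) \leq c(R_{k-1})$. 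Evaluating the identity at $k=t$ gives $\#J = c^* = c(J)$, so $\{D_j : j \in J\}$ is in general position by Remark \ref{R1}(i).

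To conclude, set $\Delta_l := E_{i_l} - E_{i_{l+1}} \geq 0$ (with the convention $E_{i_{t+1}} := 0$), so that $E_{i_k} = \sum_{l \geq k} \Delta_l$. Since $\#R_l \leq \#R \leq N+1$, Proposition \ref{P24}(iv) applies on every prefix and yields the pointwise bound $\sum_{j \in R_l} \omega(j) \leq c(R_l)$, which by the identity above equals $\#(J \cap R_l)$. Interchanging the order of summation (Abel summation),
$$\sum_{j \in R} \omega(j)E_j = \sum_{l=1}^t \Delta_l \sum_{j \in R_l}\omega(j) \leq \sum_{l=1}^t \Delta_l \cdot \#(J \cap R_l) = \sum_{j \in J} E_j = \sum_{i=1}^{c^*} E_{j_i},$$
as required.

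The main obstacle I expect is the inductive identity $c(R_k) = \#(J \cap R_k)$. Conceptually it says that $c(\cdot)$ behaves like a matroid rank function, so that the greedy construction produces a maximal independent subset of $R$ realizing $c(R)$. Its two essential inputs are Lemma \ref{L21} (supplying maximal independent completions) and the submodularity of Lemma \ref{L22}(i); the submodularity step is the subtle one, since monotonicity alone would not preclude that skipping an index could be globally suboptimal.
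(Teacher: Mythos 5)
Your proposal is correct, and it genuinely differs from the paper's proof in the decisive step, so a comparison is in order. Both arguments have the same skeleton --- order the $E_j$ decreasingly, greedily extract from $R$ a set $J$ with $\#J=c(J)=c(R)=c^*$ (so that Remark \ref{R1}(i) gives general position), invoke Proposition \ref{P24}(iv), and finish by Abel summation --- and in fact your linear scan selects exactly the same indices as the paper's rule $j_i:=\min\{t\in R\setminus K_{i-1}\}$, where $K_i:=\{k\in R:\ c(\{j_1,\dots,j_i,k\})=i\}$. The difference is where the rank information comes from, and to which sets Proposition \ref{P24}(iv) is applied. The paper applies it to the nested closure sets $K_i$ and proves $c(K_i)=i$ by re-invoking the geometric condition (ii) of Definition 1.1: the hypersurfaces $D_k$, $k\in K_i$, all contain a distinguished irreducible component $\sigma_i$ of $V\cap D_{j_1}\cap\dots\cap D_{j_i}$ of maximal dimension, forcing $c(K_i)\leq i$. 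You instead apply \ref{P24}(iv) to the prefixes $R_l$ and certify the prefix identity $c(R_k)=\#(J\cap R_k)=c(J\cap R_k)$ purely from the combinatorial lemmas of Section 2, the crucial input being submodularity (Lemma \ref{L22}(i)) applied to $R_{k-1}$ and $J_{k-1}\cup\{i_k\}$ in the skipped case. What your route buys is modularity and generality: Theorem \ref{Th25} becomes a formal consequence of the properties of $c$ already proved (monotone, integer-valued, unit increments, submodular) together with \ref{P24}(iv), with no further appeal to the geometry; it would hold verbatim for any abstract rank function with these properties, in the spirit of the paper's remark that its Second Main Theorem persists whenever Nochka weights exist, and it also sidesteps a small gloss in the paper (when $c^*=n+1$, Definition 1.1(ii), which presupposes a nonempty intersection, can only be invoked for $i\leq n$). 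What the paper's route buys is brevity: granted Definition 1.1(ii), the equality $c(K_i)=i$ is immediate, with no induction over prefixes. One small correction: monotonicity of $c$ is not Lemma \ref{L22}(ii) --- that lemma yields the unit-increment bound $c(S_2)-c(S_1)\leq\#S_2-\#S_1$ --- but rather the elementary fact that enlarging the index set shrinks the intersection and hence can only raise the codimension, which the paper uses silently and which you should state as such.
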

\begin{proof}
Without loss of the generality, we may assume that $E_1\geq E_2\geq\cdots\geq E_q.$
We shall choose indices $j_i' s$ in $R$ by induction on $i.$ We first choose 
\begin{align*}
 j_1:=\min\{t\in R\}
\end{align*}
 and set  $ K_1:=\{k\in R:\;c(\{j_1, k\})=c(\{j_1\})=1\}.$  Next, choose
 \begin{align*}
 j_2:=\min\{t\in R\setminus K_1\}
 \end{align*}
 and set $ K_2:=\{k\in R:\; c(\{j_1, j_2, k\})= c(\{j_1, j_2\})=2\}.$ Similarly, choose
\begin{align*}
 j_3:=\min\{t\in R\setminus K_2\}
\end{align*}
and set $ K_3:=\{k\in R:\; c(\{j_1, j_2, j_3,k\})= c(\{j_1, j_2, j_3\})=3\}.$
By Lemma \ref{L21}, we can repeat this process until $j_{c^*}$ and $K_{c^*}.$ Then, we have $K_1\subsetneq K_2\subsetneq\cdots\subsetneq K_{c^*}=R.$   We have $\text{dim}(D_{j_1}\cap\cdots\cap D_{j_i}\cap D_k)=\text{dim}(D_{j_1}\cap\cdots\cap D_{j_i}),$ for all  $k\in K_i.$ Therefore, by the ``$N$-subgeneral position" condition, for any $i\in\{1,\dots,c^*\},$ there exists an irreducible components $\sigma_i$ of $D_{j_1}\cap\cdots\cap D_{j_i}$ with $\text{dim}\sigma_i=\text{dim}(D_{j_1}\cap\cdots\cap D_{j_i})$ such that we have that $D_k$ contains $\sigma_i$ for all $k\in K_i.$  Thus, $\text{dim}\cap_{j\in K_i}D_j=\text{dim}(D_{j_1}\cap\cdots\cap D_{j_i})=n-i.$ Then $c(K_i)=i$ for all $i\in\{1,\dots,c^*\}.$

Set $K_0:=\varnothing$ and $a_i:=\sum_{j\in K_{i}\setminus K_{i-1}}\omega(j),\;i=1,\dots,c^*.$  Therefore, by Proposition \ref{P24}, we get
\begin{align*}
\sum_{k=1}^ia_i=\sum_{j\in K_i}\omega(j)\leq c(K_i)= i\;\; \text{for all}\; i\in\{1,\dots,c^*\}.
\end{align*}
 On the other hand, for any $1\leq i\leq c^*$ we have  $E_j\leq E_{j_i}$ for all $j\in K_{i}\setminus K_{i-1}(\subseteq R\setminus K_{i-1}).$  Thus, we have
\begin{align*}
\sum_{j\in R}\omega(j)E_j&=\sum_{i=1}^{c^*}\sum_{j\in K_{i}\setminus K_{i-1}}\omega(j)E_j\\
&\leq \sum_{i=1}^{c^*}\sum_{j\in K_{i}\setminus K_{i-1}}\omega(j)E_{j_i}=\sum_{i=1}^{c^*}a_iE_{j_i}\\
&=\sum_{i=1}^{c^*-1}(a_1+\cdots +a_i)(E_{j_i}-E_{j_{i+1}})+(a_1+\cdots+a_{j_{c^*}})E_{j_{c^*}}\\
&\leq \sum_{i=1}^{c^*-1} i(E_{j_i}-E_{j_{i+1}})+c^*E_{j_{c^*}}\\
&=\sum_{i=1}^{c^*}E_{j_i}.
\end{align*}
This completes the proof of Theorem \ref{Th25}.
\end{proof}

\section{Some lemmas}
Let $X\subset \C P^M$ be a projective variety of dimension $n$ and
degree $\bigtriangleup.$ Let $I_X$ be the prime ideal in $\C
[x_0,\dots,x_M]$ defining $X.$ Denote by $\C [x_0,\dots,x_M]_m$ the
vector space of homogeneous polynomials in $\C [x_0,\dots,x_M]$ of
degree $m$ (including $0$). Put $I_X(m):=\C [x_0,\dots,x_M]_m\cap
I_X.$

The Hilbert function $H_X$ of $X$ is defined by
\begin{align}\label{h1}
H_X(m):=\dim \C [x_0,\dots,x_M]_m\diagup I_X(m).
\end{align}
In particular we have $H_X(m) \leq \big(
\begin{array}{c}M+m\\M \end{array} \big)$.
By the usual theory of Hilbert polynomials, we have
\begin{align}\label{h2}
H_X(m):=\bigtriangleup\cdot\frac{m^n}{n!}+O(m^{n-1}).
\end{align}
We also need the following result, which should be well known, but since we do not know
a good reference, we add a short proof:
\begin{lemma}\label{m+1}
For $n \geq 1$, we have $H_X(m) \geq m+1$ for all $m \geq 1$.
\end{lemma}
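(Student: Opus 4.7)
The idea is to interpret $H_X(m)$ as the dimension of the image of the evaluation map of degree $m$ forms on $X$, and exploit the fact that $\dim X \geq 1$ forces $X$ to contain arbitrarily many distinct points.

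First, since $X$ is a projective variety of dimension $n\geq 1$, it is infinite. Fix any $m+1$ distinct points $P_0,P_1,\dots,P_m\in X$ and choose affine representatives $\widetilde P_0,\dots,\widetilde P_m\in \C^{M+1}\setminus\{0\}$. Consider the evaluation map
\begin{align*}
\mathrm{ev}\colon \C[x_0,\dots,x_M]_m\;\longrightarrow\;\C^{m+1},\qquad F\longmapsto (F(\widetilde P_0),\dots,F(\widetilde P_m)).
\end{align*}
Because every $F\in I_X(m)$ vanishes on $X$, and in particular at every $P_i$, we have $I_X(m)\subset \ker(\mathrm{ev})$, so $\mathrm{ev}$ descends to a linear map
\begin{align*}
\overline{\mathrm{ev}}\colon \C[x_0,\dots,x_M]_m/I_X(m)\longrightarrow \C^{m+1},
\end{align*}
whose image coincides with that of $\mathrm{ev}$.

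The key step is to show that $\mathrm{ev}$ is surjective. For each index $i\in\{0,\dots,m\}$ and each $j\neq i$, the fact that $P_i\neq P_j$ means we can pick a linear form $L_{i,j}$ with $L_{i,j}(\widetilde P_j)=0$ but $L_{i,j}(\widetilde P_i)\neq 0$ (the linear forms vanishing at $\widetilde P_j$ form a hyperplane in $\C[x_0,\dots,x_M]_1$, which does not contain $\widetilde P_i$). The product $F_i:=\prod_{j\neq i} L_{i,j}$ is a homogeneous polynomial of degree $m$ with $F_i(\widetilde P_i)\neq 0$ and $F_i(\widetilde P_j)=0$ for $j\neq i$. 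Hence the vectors $\mathrm{ev}(F_i)$ span $\C^{m+1}$, so $\mathrm{ev}$, and therefore $\overline{\mathrm{ev}}$, is surjective. This gives
\begin{align*}
H_X(m)=\dim\bigl(\C[x_0,\dots,x_M]_m/I_X(m)\bigr)\geq m+1.
\end{align*}

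The only substantive step is the construction of the separating polynomials $F_i$, and this is routine since $m+1$ distinct points of projective space can always be separated by a product of $m$ well-chosen linear forms. I expect no serious obstacle; the proof is short enough that it may be presented essentially verbatim in the paper.
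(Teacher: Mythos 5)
Your proof is correct, but it takes a genuinely different route from the paper's. You choose $m+1$ distinct points of $X$ (using only that a positive-dimensional projective variety is infinite) and show that the evaluation map $\C[x_0,\dots,x_M]_m\to\C^{m+1}$ at representatives of these points is surjective, by interpolating with products of $m$ linear forms; since $I_X(m)$ lies in its kernel, the quotient has dimension at least $m+1$. The paper argues field-theoretically instead: it shows that for some coordinate $x_i$ the $m+1$ monomials $x_0^{m-j}x_i^j$, $j=0,\dots,m$, are linearly independent modulo $I_X(m)$, because otherwise every ratio $x_i/x_0$ would be algebraic over $\C$, so the function field $\C(X)$ would be an algebraic extension of $\C$, forcing $\dim X=0$ by the theorem that $\dim X$ equals the transcendence degree of $\C(X)$ (Hartshorne, p.\ 17). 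Your approach is more elementary and in fact more general: it uses neither irreducibility of $X$ nor dimension theory, only that $X$ has at least $m+1$ points, so it yields $H_X(m)\geq m+1$ for any infinite projective subset. The paper's argument buys an explicit family of monomials (powers of two coordinates) that remain independent in the quotient, and ties the lemma to the function-field characterization of dimension. The one point you should state explicitly (it is standard) is that a projective variety of dimension $\geq 1$ over $\C$ is indeed infinite, which is what licenses the choice of the $m+1$ points.
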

\begin{proof}
Using the notations introduced above, we first observe that there exists some $x_i$
which is not identically zero on $X$, without loss of generality
we may assume that it is $x_0$. It suffices to prove the following

CLAIM: For all $m \geq 1$ there exists $i \in \{1,...,M\}$ such that for
all $c_{ij} \in \C$ which are not all zero we haveÊ
$$\sum_{j=0}^m c_{ij}x_0^{m-j}x_i^j \not\equiv 0 \; \text{on} \; X.$$

In fact, if the claim is true, it means that no (nontrivial) complex linear combination
of the $m+1$ monomials $x_0^{m-j}x_i^j, \: j=0,...,m$ vanishes identically
on $X$, and, hence, can be contained in $I_X(m)$. So $H_X(m) \geq m+1$.

Assume that the claim does not hold. Then there exists
$m \geq 1$ such that for all $i \in \{1,...,M\}$ there exist  $c_{ij} \in \C$ which are not all zero so that we haveÊ
$$\sum_{j=0}^m c_{ij}x_0^{m-j}x_i^j \equiv 0\;  \text{on}\;  X.$$
Dividing by $x_0^m$ we get thatÊ
$$\sum_{j=0}^m c_{ij}(\frac{x_i}{x_0})^j \equiv 0\;  \text{on} \; X.$$
This means that the rational functions $\frac{x_i}{x_0}, \:i=1,...,M$ on $X$
are all algebraic over $\C$. Since the subset of rational functions on $X$ which are algebraic over $\C$ forms a subfield of the function field $\C (X)$ of $X$ and since (by what we saw above) this subfield contains the rational functions
$\frac{x_i}{x_0}, \:i=1,...,M$ on $X$, which  generate $\C (X)$ as
a field, this means that $\C (X)$ Êover $\C$ is an algebraic field extension.
So the transcendence degree of $\C (X)$ over $\C$ is zero.
But by a well know theorem (Hartshorne \cite{Ha} p.17), observing that we have
$\C (X) = \C(X_0)$ and $\dim X = \dim X_0$ if $X_0= X \cap \{x_0\not= 0\}$
is one affine chart of $X$, we getÊ
$$0 = \text{transcendence degree}(\C(X)) = \dim X.$$
With other words, if $n= \dim X \geq 1$, we get a contradiction, proving the claim.
\end{proof}

For each tuple $c=(c_0,\dots,c_M)\in\R_{\geq 0}^{M+1},$ and
$m\in\N,$ we define the $m$-th Hilbert weight $S_X(m,c)$ of $X$ with
respect to $c$ by
\begin{align*}
S_X(m,c):=\max\sum_{i=1}^{H_X(m)}I_i\cdot c,
\end{align*}
where $I_i=(I_{i0},\dots, I_{iM})\in\N_0^{M+1}$ and the maximum is
taken over all sets $ \{x^{I_i}=x_0^{I_{i0}}\cdots x_M^{I_{iM}}\}$
whose residue classes modulo $I_X(m)$ form a basis of the vector
space $\C [x_0,\dots,x_M]_m\diagup I_X(m).$
\begin{lemma}\label{L2.1}
Let $X\subset \C P^M$ be an algebraic variety of dimension $n$ and
degree $\bigtriangleup.$ Let $m>\bigtriangleup$ be an integer and
let $c=(c_0,\dots,c_M)\in\R_{\geq 0}^{M+1}.$ Let $\{i_0,\dots,
i_{n}\}$ be a subset of $\{0,\dots, M\}$ such that
$\{x=(x_0:\cdots:x_M)\in\C P^M: x_{i_0}=\cdots=x_{i_n}=0\}\cap
X=\varnothing.$ Then
\begin{align*}
\frac{1}{mH_X(m)}S_X(m,c)\geq\frac{1}{(n+1)}(c_{i_0}+\cdots
+c_{i_n})-\frac{(2n+1)\bigtriangleup}{m}\cdot\max_{0\leq i\leq
M}c_i.
\end{align*}
\end{lemma}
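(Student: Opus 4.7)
The plan is to construct a monomial family spanning $R_m:=\C[x_0,\ldots,x_M]_m/I_X(m)$, decompose it into ``strands'' each obtained by multiplying a symmetric family of monomials in $x_0,\ldots,x_n$ by a fixed low-degree monomial $g_\ell$, select a basis $\mathcal B$ from this family, and compute the total $c$-weight.

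\emph{Reduction.} I would first permute coordinates so that $(i_0,\ldots,i_n)=(0,1,\ldots,n)$; every term in the inequality is invariant under such a permutation. The hypothesis $\{x_0=\cdots=x_n=0\}\cap X=\varnothing$ now says that $x_0,\ldots,x_n$ form a homogeneous system of parameters for the coordinate ring $R:=\C[x_0,\ldots,x_M]/I_X$, equivalently that the linear projection $\pi\colon X\to\C P^n$, $(x_0:\cdots:x_M)\mapsto(x_0:\cdots:x_n)$, is finite and surjective of degree $\Delta$, so that $R$ is a finitely generated graded module over $A:=\C[x_0,\ldots,x_n]$.

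\emph{Strand decomposition and weight estimate.} The key step is a near-free decomposition: one produces monomials $g_1,\ldots,g_t\in\C[x_{n+1},\ldots,x_M]$ of degrees $\delta_\ell\leq\Delta$ such that the family
$$\mathcal F=\bigcup_{\ell=1}^t\{x_0^{I_0}\cdots x_n^{I_n}\,g_\ell:|I|=m-\delta_\ell\}$$
spans $R_m$; this follows from Noether normalization applied to the finite surjection $\pi$ combined with a Castelnuovo--Mumford regularity bound. Using the Hilbert polynomial estimate $H_X(m)=\Delta m^n/n!+O(m^{n-1})$ of (\ref{h2}), the surplus $|\mathcal F|-H_X(m)$ is $O(m^{n-1})$, negligible compared to $H_X(m)$. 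One then chooses a basis $\mathcal B\subseteq\mathcal F$ of $R_m$ by greedy selection in decreasing order of $c$-weight. Within each strand $\ell$, the candidates $x_0^{I_0}\cdots x_n^{I_n}g_\ell$ over $|I|=m-\delta_\ell$ form a symmetric family under permutations of $x_0,\ldots,x_n$, so the sum of their $c$-weights equals $\binom{m-\delta_\ell+n}{n}\bigl[\tfrac{m-\delta_\ell}{n+1}(c_0+\cdots+c_n)+g_\ell\cdot c\bigr]$. Summing over $\ell$, using $\delta_\ell\leq\Delta$ and $g_\ell\cdot c\geq 0$, and subtracting the weight loss from the $O(m^{n-1})$ discarded candidates (each contributing at most $m\max_ic_i$), I would obtain
$$S_X(m,c)\;\geq\;\sum_{b\in\mathcal B}b\cdot c\;\geq\;\frac{mH_X(m)}{n+1}(c_0+\cdots+c_n)-(2n+1)\Delta H_X(m)\max_{0\leq i\leq M}c_i,$$
and dividing by $mH_X(m)$ yields the claim.

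\emph{Main obstacle.} The heart of the difficulty is the near-free decomposition: one needs uniform control on the degrees $\delta_\ell\leq\Delta$ and on the total number $t$ of strands, valid already for $m>\Delta$ rather than only asymptotically. When $X$ is not arithmetically Cohen--Macaulay, $R$ is not a free $A$-module and the strands genuinely overlap, requiring careful error bookkeeping; tracking this bookkeeping together with the $O(m^{n-1})$ Hilbert-polynomial correction of (\ref{h2}) is precisely what produces the explicit constant $2n+1$ in the final error term.
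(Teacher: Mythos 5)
Your proposal has a genuine gap at exactly the point you yourself flag as the ``main obstacle'': the near-free decomposition does not exist with the quantitative features your argument needs, and nothing in the proposal repairs this. First, the degrees of the module generators of $R=\C[x_0,\dots,x_M]/I_X$ over the Noether normalization $A$ are \emph{not} bounded by $\bigtriangleup$ in general: the justification you invoke is a Castelnuovo--Mumford regularity bound of Eisenbud--Goto type, and the regularity of irreducible projective varieties is not bounded by any polynomial in the degree (McCullough--Peeva), so the claim $\delta_\ell\leq\bigtriangleup$ is unsubstantiated; for $\bigtriangleup<m<$ generation degree your strands need not even span $R_m$. Second, and independently, the surplus $\#\mathcal F-H_X(m)$ is not $O(m^{n-1})$: each strand contributes $\binom{m-\delta_\ell+n}{n}\sim m^n/n!$ monomials while $H_X(m)\sim\bigtriangleup\,m^n/n!$, so the surplus is negligible only if exactly $t=\bigtriangleup$ strands suffice, i.e.\ only if $R$ is a free $A$-module on $\bigtriangleup$ monomial generators. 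Whenever $X$ fails to be arithmetically Cohen--Macaulay (or the generators cannot be taken monomial), one needs $t>\bigtriangleup$ strands, the surplus is of order $(t-\bigtriangleup)m^n/n!$, comparable to $H_X(m)$ itself, and your discarded-weight error is then of order $mH_X(m)\max_ic_i$; after dividing by $mH_X(m)$ this is a fixed positive multiple of $\max_ic_i$, which swamps the required error term $\frac{(2n+1)\bigtriangleup}{m}\max_ic_i$. Note finally that the lemma is needed with the explicit constant $2n+1$ for \emph{every} $m>\bigtriangleup$ (the paper chooses $m=m_0$ explicitly to prove Proposition \ref{P5}), whereas your bookkeeping via the asymptotic relation (\ref{h2}) carries unspecified constants and so could at best yield an inexplicit statement for $m\gg0$.

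For comparison: the paper does not prove Lemma \ref{L2.1} but quotes it, and the quoted proofs (Theorem 4.1 of \cite{EF} together with Lemma 5.1 of \cite{EF2}, or \cite{R3}) proceed by an entirely different mechanism, namely Mumford's theory relating the $m$-th Hilbert weight to the Chow weight $e_X(c)$, giving $\frac{1}{mH_X(m)}S_X(m,c)\geq\frac{1}{(n+1)\bigtriangleup}e_X(c)-\frac{(2n+1)\bigtriangleup}{m}\max_ic_i$, followed by the lower bound $e_X(c)\geq\bigtriangleup(c_{i_0}+\cdots+c_{i_n})$, which is where the hypothesis $\{x_{i_0}=\cdots=x_{i_n}=0\}\cap X=\varnothing$ enters (via nonvanishing of the Chow form of $X$ on the corresponding linear subspace). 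If you wish to salvage your route, the workable variant is to extract $\bigtriangleup$ monomials generating a free $A$-submodule of $R$ and extend the resulting independent monomial family to a monomial basis of $R_m$; but the degree bound on those generators --- hence the explicit constant and the validity threshold $m>\bigtriangleup$ --- is precisely what cannot be extracted from the hypotheses, so this would again produce only an asymptotic surrogate of the lemma, not the lemma itself.
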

\begin{proof} We refer to \cite{EF}, Theorem 4.1, and \cite{EF2}, Lemma 5.1 (or \cite{R3}, Theorem 2.1 and Lemma 3.2).
\end{proof}
\begin{lemma}[Theorem 2.3 of  \cite{R1}]\label{L31}
Let f be a linearly nondegenerate holomorphic mapping of $\C$ into
$\C P^n$ and let $\{H_j\}_{j=1}^q$ be arbitrary hyperplanes in $\C
P^n.$  Then for every $\epsilon,$
\begin{align*}
 \Big\Vert\int_0^{2\pi}\max_{K\in\mathcal K}\sum_{j\in
K}\log \frac{\Vert f(re^{i\theta})\Vert\cdot\Vert H_j\Vert}{|H_j(f(re^{i\theta}))|}\frac{d\theta}{2\pi}+N_{W(f)}(r)
\leq (n+1+\epsilon)T_f(r).
\end{align*}
where  $\mathcal K$ is the set of
all subsets $K\subset\{1,\dots,q\}$ such that $\#K=n+1$ and the hyperplanes
$H_j,\;j\in K$ are in general position, $W(f)$ is the Wronskian
of $f,$ and $\Vert H_j\Vert$ is the maximum of absolute values of the coefficients of $H_j.$
\end{lemma}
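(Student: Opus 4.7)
The plan is to follow the Cartan--Ahlfors Wronskian scheme, exploiting the fact that the pointwise maximum over $\mathcal K$ in the statement replaces any global general-position assumption on the full collection $\{H_j\}$: we only need the argument to go through for each individual $K \in \mathcal K$ and then take the pointwise maximum before integrating.

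First I would establish the basic Wronskian identity. Fix $K = \{j_0,\dots,j_n\} \in \mathcal K$; since the hyperplanes $\{H_j\}_{j \in K}$ are in general position, the coefficient matrix $M_K$ of the linear forms $H_{j_0},\dots,H_{j_n}$ is invertible, and by multilinearity of the Wronskian one obtains
\begin{align*}
W(H_{j_0}(f),\dots,H_{j_n}(f)) = (\det M_K) \cdot W(f_0,\dots,f_n).
\end{align*}
Dividing the $k$-th column of the determinant on the left by $H_{j_k}(f)$ expresses $W(f)/\prod_{j \in K} H_j(f)$, up to the nonzero constant $\det M_K$, as the determinant of a matrix whose entries are of the form $(H_{j_k}(f))^{(i)}/H_{j_k}(f)$. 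Nevanlinna's lemma on logarithmic derivatives bounds each such entry by $o(T_f(r))$ in proximity outside a Borel exceptional set, and since $\mathcal K$ is finite and the determinant expands into finitely many products, this yields the uniform estimate
\begin{align*}
\Big\Vert \int_0^{2\pi} \max_{K \in \mathcal K} \log^+ \Bigl| \frac{W(f)(re^{i\theta})}{\prod_{j \in K} H_j(f)(re^{i\theta})} \Bigr| \frac{d\theta}{2\pi} \leq \epsilon\, T_f(r) + O(1).
\end{align*}

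Next I would reduce the pointwise integrand to an expression involving $W(f)$. For each $z$ and each $K \in \mathcal K$, a direct rewrite gives
\begin{align*}
\sum_{j \in K} \log \frac{\Vert f(z)\Vert\, \Vert H_j\Vert}{|H_j(f(z))|} = (n+1)\log\Vert f(z)\Vert - \log|W(f)(z)| + \log \Bigl| \frac{W(f)(z)}{\prod_{j \in K} H_j(f(z))} \Bigr| + O(1),
\end{align*}
where the $O(1)$ absorbs $\log|\det M_K|$ and $\sum_{j \in K} \log\Vert H_j\Vert$, both uniformly bounded over the finite set $\mathcal K$. Taking $\max_{K}$ of both sides, integrating in $\theta$, bounding the last term from above by $\log^+$, and applying Jensen's formula---which yields $\int_0^{2\pi} \log \Vert f\Vert\, d\theta/(2\pi) = T_f(r)$ and $\int_0^{2\pi} \log|W(f)|\, d\theta/(2\pi) = N_{W(f)}(r) + O(1)$, using that $W(f) \not\equiv 0$ by linear nondegeneracy---combines with the previous display to yield the stated inequality after moving $N_{W(f)}(r)$ to the left.

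The main obstacle is the uniform logarithmic-derivative bound: each individual ratio $(H_j(f))^{(k)}/H_j(f)$ has small proximity only outside its own exceptional set, and to control $\max_{K \in \mathcal K}$ one must merge the finitely many exceptional sets (one per entry of the $(n+1)\times(n+1)$ determinant and one per $K$) into a single Borel exceptional set, which is exactly what the symbol $\Vert$ encodes. Linear nondegeneracy of $f$ is used crucially both to ensure $W(f) \not\equiv 0$ (so that $N_{W(f)}(r)$ is a well-defined finite counting function) and to guarantee $\mathcal K \neq \varnothing$, so that the inner maximum is taken over a nonempty family. The complete technical proof is carried out in Ru \cite{R1}, whose Theorem~2.3 is exactly the statement quoted here.
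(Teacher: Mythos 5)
The paper offers no proof of this lemma at all: it is quoted verbatim as Theorem 2.3 of Ru \cite{R1}, which is exactly what your closing sentence does, and your Wronskian/logarithmic-derivative sketch is the standard Cartan-type argument behind Ru's theorem, correct in outline (the column-division identity, the merging of finitely many exceptional sets, and the Jensen step are all sound). The only slip is the side remark that linear nondegeneracy of $f$ guarantees $\mathcal K\neq\varnothing$: whether $\mathcal K$ is empty depends only on the hyperplane configuration, not on $f$ (e.g.\ $q\leq n$ forces $\mathcal K=\varnothing$ for every $f$), and in that degenerate case the residual inequality $N_{W(f)}(r)\leq (n+1+\epsilon)T_f(r)$ still needs the same column-division and logarithmic-derivative argument applied to the coordinate functions $f_0,\dots,f_n$ rather than to the forms $H_j(f)$.
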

\begin{lemma}[Propositions 4.5 and 4.10 of \cite{b5}]\label{L2.2}
Let $f$ be a linearly nondegenerate holomorphic mapping of $\C$ into
$\C P^M$ with  reduced representation $f=(f_0:\cdots :f_M).$ Let
$W(f)=W(f_0,\dots,f_M)$ be the Wronskian of $f.$ Then
\begin{align*}
\nu_{\frac{f_0\cdots f_M}{W(f)}}\leq \sum_{i=0}^M \min\{\nu_{f_i},
M\}.
\end{align*}
\end{lemma}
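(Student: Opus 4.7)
The plan is to argue the inequality of divisors pointwise. I fix $z_0\in\C$ and set $e_i:=\nu_{f_i}(z_0)$, so that $\nu_{f_0\cdots f_M}(z_0)=\sum_{i=0}^M e_i$. Since $e_i-\min\{e_i,M\}=\max\{e_i-M,0\}$, the statement would follow from the local Wronskian estimate
$$
\nu_{W(f)}(z_0)\;\geq\;\sum_{i=0}^M\max\{e_i-M,\,0\}.
$$
Subtracting this from $\sum_i e_i$ gives $\nu_{f_0\cdots f_M}(z_0)-\nu_{W(f)}(z_0)\leq\sum_i\min\{e_i,M\}$, which in turn upper-bounds the zero divisor of the meromorphic quotient $f_0\cdots f_M/W(f)$ at $z_0$. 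Note that the quotient is well defined because $W(f)\not\equiv0$ by linear nondegeneracy of $f$.

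To obtain the Wronskian estimate I would perform a short local computation. First, a Taylor expansion at $z_0$ shows that if $f_i$ vanishes to order $e_i$ there, then its $j$-th derivative $f_i^{(j)}$ vanishes to order at least $\max\{e_i-j,0\}$: writing $f_i(z)=(z-z_0)^{e_i}g_i(z)$ with $g_i(z_0)\neq0$ and applying the Leibniz rule makes this immediate. Expanding the Wronskian as
$$
W(f)=\sum_{\sigma\in S_{M+1}}\operatorname{sgn}(\sigma)\prod_{i=0}^M f_i^{(\sigma(i))},
$$
each summand then has order at $z_0$ at least $\sum_i\max\{e_i-\sigma(i),0\}\geq\sum_i\max\{e_i-M,0\}$, where the last step uses only the crude inequality $\sigma(i)\leq M$. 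Summing these contributions (and observing that no cancellation can lower the order of a sum below the minimum order of its terms) yields the required lower bound for $\nu_{W(f)}(z_0)$.

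I do not expect any serious obstacle: the argument rests on the two elementary facts above and nothing more. The one point that requires care is to keep the truncation $\max\{\cdot,0\}$ in the bound for the vanishing order of $f_i^{(j)}$, so that the crude estimate $\sigma(i)\leq M$ still produces a meaningful non-negative lower bound at those positions where $e_i<M$; without this truncation one would formally accumulate spurious negative contributions. Once this subtlety is respected, the chain of inequalities assembles into exactly the asserted divisor inequality.
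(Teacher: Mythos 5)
Your proof is correct, but note that the paper itself offers no proof of this lemma to compare against: it is quoted directly from Fujimoto (Propositions 4.5 and 4.10 of \cite{b5}). Your argument is the standard elementary one and is complete: the Leibniz-rule bound $\nu_{f_i^{(j)}}(z_0)\geq\max\{e_i-j,0\}$ (with the truncation at $0$ correctly kept), the permutation expansion of the Wronskian combined with the crude bound $\sigma(i)\leq M$ to get $\nu_{W(f)}(z_0)\geq\sum_{i=0}^M\max\{e_i-M,0\}$, and the fact that the vanishing order of a sum is at least the minimum of the orders of its terms. The passage from the order estimate to the zero-divisor estimate is also sound: since the right-hand side $\sum_i\min\{e_i,M\}$ is nonnegative, bounding $\nu_{f_0\cdots f_M}(z_0)-\nu_{W(f)}(z_0)$ by it also bounds $\max\{\nu_{f_0\cdots f_M}(z_0)-\nu_{W(f)}(z_0),0\}$, which is the zero divisor of the quotient; and linear nondegeneracy enters exactly where you invoke it, to ensure $W(f)\not\equiv 0$. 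What your route buys is self-containedness: the truncation at level $M$ is precisely what the paper needs, and you obtain it from first principles in half a page. What the paper's citation buys is brevity and access to Fujimoto's more general framework (his propositions are formulated for a broader setting with more refined bookkeeping), none of which is actually required for the application here.
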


\section{Proof of Theorem \ref{Th4}}
We first prove Theorem \ref{Th4} for the case where all the $Q_j\:(j=1,...,q)$ have the same degree
$d$.

Since $D_1,\dots,D_q$ are in $N$-subgeneral position in $V,$  we have $\cap_{j=1}^qD_j\cap V=\varnothing.$
We define a map $\Phi:V\longrightarrow \C P^{q-1}$ by
 $\Phi(x)=(Q_1(x):\cdots:Q_q(x)).$ Then $\Phi$ is
 a finite morphism (see \cite{Sha}, Theorem 8, page 65). We have that $Y:=im\Phi$ is a complex projective subvariety of $\C P^{q-1}$
 and  $\dim Y=n$ and 
  \begin{align}\label{deg}
 \bigtriangleup:=\deg Y\leq d^n\cdot \deg
 V.
  \end{align}
 This follows, in the same way as  \cite{Sha}, Theorem 8, page 65, from the
 fact that $\Phi:V\longrightarrow \C P^{q-1}$ is the composition of the restriction of  the 
 $d$-uple embedding $\rho_d|_V : V\longrightarrow \C P^{L-1}$ to $V$ (with
 $L= \big( 
 \begin{array}{c} M+d\\M \end{array} 
 \big)$) with the linear projection $p: \C P^{L-1} \longrightarrow \C P^{q-1}$, defined by the linear forms  $Q_1,...,Q_q$ in the monomials of degree $d$,
 since we have:
 $$\deg Y = \deg \Phi (V) \leq \deg \rho_d|_V (V) \leq d^n\cdot \deg V.$$
  It is clear that for any $1\leq i_0<\cdots <i_n\leq q$ such that $\cap_{i=0}^n D_{j_i}\cap V=\varnothing,$ we have
 \begin{align}\label{new1}
 \{y=(y_1:\cdots :y_q)\in \C P^{q-1}:\;
 y_{i_0}=\cdots=y_{i_n}=0\}\cap Y=\varnothing.
 \end{align}

 For a positive integer $m,$ denote by $\{I_1,\dots,I_{q_m}\}$ the set  of all $I_i:=(I_{i1},\dots,I_{iq})\in\N_0^q$ with
 $I_{i1}+\cdots+I_{iq}=m.$ We have $q_m:=\binom{q+m-1}{m}.$

 Let $F$ be a holomorphic mapping of $\C$ into $\C P^{q_m-1}$ with the reduced representation
  $F=\big(Q_1^{I_{11}}(f)\cdots Q_q^{I_{1q}}(f):\cdots:Q_1^{I_{q_m1}}(f)\cdots
  Q_q^{I_{q_mq}}(f)\big),$ (note that $Q_1^m(f),\dots, Q_q^m(f)$
  have no common zero point).

 Define an isomorphism between vector spaces, $\Psi:
 \C[z_1,\dots,z_{q_m}]_1\longrightarrow \C[y_1,\dots,y_{q}]_m$ by
 $\Psi(z_i):=y^{I_i}\;\;(i=1,\dots,q_m).$
Consider the vector space $\mathcal H
:=\{H\in\C[z_1,\dots,z_{q_m}]_1: H(F)\equiv 0$ \}.  Then $F$ is a
linearly nondegenerate mapping of $\C$ into the complex projective
space $P:= \cap_{H\in\mathcal H}
  \{H=0\}\subset \C P^{q_m-1},$ and we will from now on, by abuse of notation, consider $F$ to be this linearly nondegenerate map $F:\C \rightarrow P$.

For any linear form $H \in \C[z_1,\dots,z_{q_m}]_1,$ since $f$ is
algebraically nondegenerate, we have that $H\in\mathcal H$ if only
if
  $$H(Q_1^{I_{11}}(x)\cdots Q_q^{I_{1q}}(x),\cdots, Q_1^{I_{q_m1}}(x)\cdots Q_q^{I_{q_mq}}(x))\equiv 0
  \;\textit{on\;}
  V.$$
  This is possible if and only if  $\Psi(H)(y):=H(y^{I_1},\cdots,y^{I_{q_m}})\equiv 0$ on
  $Y.$ Therefore, we get that $\Psi(\mathcal H)=(I_Y)_m.$ On the
  other hand
  $\Psi$ is an isomorphism. Hence, we have
  \begin{align}\label{new2}
  \dim P=\dim\bigcap\limits_{H\in\mathcal H}
  \{H=0\}&=q_m-1-\dim \mathcal H\notag\\
  &=q_m-1-\dim (I_Y)_m=H_Y(m)-1.
\end{align}
   We define hyperplanes $H_j\;(j=1,\dots,q_m)$ in the complex projective space
$P$ by $H_j:=\{(z_1:\dots :z_{q_m})\in \C P^{q_m-1}:\;z_j=0\}\cap P,$ (these intersections are not empty by B\'ezout's theorem, and they are proper algebraic subsets of $P$ since $V\not\subset D_k,\; 1\leq k\leq
  q).$

  Denote by $\mathcal L$ the set of all  subsets $J$ of
  $\{1,\dots,q_m\}$ such that $\#J=H_Y(m)$ and the hyperplanes $H_j, j\in J,$ are in general position in $P.$
    Since $\Psi$ is an
  isomorphism and $\Psi(\mathcal H)=I_Y(m),$ $\mathcal L$ is also the set of all subsets $J$ of $\{1,\dots,q_m\}$
  such that
  $\{y^{I_j},\; j\in J\}$ is a basis of $\C[y_1,\dots,y_{q}]_m\diagup I_Y(m).$

  For each $j\in\{1,\dots,q\}$ and $k\in\{1,\dots,q_m\},$ we put
  \begin{align*}
 E_{D_j}(f)=\log\frac{\Vert f\Vert^d\cdot \Vert Q_j\Vert}{|Q_j(f)|}\geq 0\;\;\text{and}\;\; E_{H_k}(F)=\log\frac{\Vert F\Vert\cdot \Vert H_k\Vert}{|H_k(F)|}\geq 0,
  \end{align*}
 where $\Vert Q_j\Vert$ (respectively $\Vert H_k\Vert$) is the maximum of absolute values of the coefficients of $Q_j$ (respectively $H_k$). They are continuous functions with values
 in $\R_{\geq 0} \cup \{+ \infty\}$ which take the value $+\infty$ only on discrete subsets of $\C$.

  Denote by $\mathcal K$ the set of all subsets $K$ of $\{1,\dots,q\}$ such that $\#K=n+1$ and $\cap_{j\in K}D_j\cap V=\varnothing.$ Let $\mathcal N$ be the set of all subsets $J\subset\{1,\dots,q\}$ with $\#J=N+1.$
   Let $\{\omega(j)\}_{j=1}^q$ and $\Theta$ be Nochka weights and Nochka constant for the hypersurfaces $D_j$ in $N$-subgeneral position in $V.$ By Theorem \ref{Th25}, for any $z\in \C$ and any  $J\in\mathcal N,$  there exists a subset $K(J,z)\in\mathcal K,$ such that
  \begin{align}\label{a1}
  \sum_{j\in J}\omega(j)E_{D_{j}}(f(z))\leq \sum_{j\in K(J,z)}E_{D_j}(f(z)).
  \end{align}

  For any $J\in\mathcal N,$ since the hypersurfaces $D_j\;(j=1,\dots,q)$ are in $N$-subgeneral position in $V,$  the function $\lambda_J(x):=\frac{\max_{j\in J}|Q(x)|}{\Vert x\Vert^d}$ is continuous on $V$ and $\lambda_J(x)>0$ for all $x\in V.$ On the other hand, $V$ is compact, so there exist positive constants $c_J, c'_J$ such that $c'_J\geq\lambda_J(f(z))\geq c_J$ for all $z\in\C.$ This implies that
  \begin{align}\label{a5}
  d\cdot\log\Vert f\Vert=\max_{j\in J}\log|Q(f)|+O(1),\;\text{for all}\;J\in\mathcal N.
  \end{align}
  Therefore, there exists a positive constant $c$ such that
  \begin{align*}
  \min_{\{j_1,\dots, j_{q-N-1}\}}\sum_{i=1}^{q-N-1} E_{D_{j_i}}(f)\leq c.
  \end{align*}
  Then, we have
  \begin{align}\label{a2}
  \sum_{j=1}^q\omega(j) E_{D_j}(f)\leq\max_{J\in\mathcal N} \sum_{j\in J}\omega(j) E_{D_j}(f)+O(1).
  \end{align}
  By (\ref{a1}) and (\ref{a2}), for every $z\in\C,$ we have
  \begin{align*}
  \sum_{j=1}^q\omega(j) E_{D_j}(f(z))&\leq \max_{J\in\mathcal N}\sum_{j\in K(J,z)}E_{D_j}(f(z))+O(1)\notag\\
  &\leq \max_{K\in\mathcal K}\sum_{j\in K}E_{D_j}(f(z))+O(1).
  \end{align*}
  This implies that
  \begin{align}\label{a9}
  \sum_{j=1}^q\omega(j)d\log\Vert f\Vert-\sum_{j=1}^q\omega(j)\log|Q_j(f)|&\leq\sum_{j=1}^q\omega(j)E_{D_j}(f)+O(1)\notag\\
  &\leq\max_{K\in\mathcal K}\sum_{j\in K}E_{D_j}(f)+O(1).
  \end{align}
  Applying integration on the both sides of (\ref{a9}), using Proposition \ref{P24} and Jensen's formula, we get
  \begin{align}\label{a6}
  d\big(\Theta(q-2N+n-1)&+n+1\big)T_f(r)-\sum_{j=1}^q\omega(j)N_f(r,D_j)\notag\\
  &\leq \int_0^{2\pi}\max_{K\in\mathcal K}\sum_{j\in
K}E_{D_j}(f(re^{i\theta}))\frac{d\theta}{2\pi}+O(1).
  \end{align}

  Since $Im F\subset P$ and  $\{Q_1^{I_{i1}}(f)\cdots
  Q_q^{I_{iq}}(f),\;1\leq i\leq q_m\}$ have no common zero point, for every $J\in \mathcal L,$
  the holomorphic functions  $\{Q_1^{I_{i1}}(f)\cdots
  Q_q^{I_{iq}}(f),\;i\in J\}$ also have no common zero point.

  \noindent Then, for every $J\in \mathcal L,$ we have
  \begin{align*}
  \Vert F\Vert=\max_{i\in J} |H_i(F)|+O(1)&=\max_{i\in J}| Q_1^{I_{i1}}(f)\cdots
  Q_q^{I_{iq}}(f)| +O(1)\\
  &\leq \Vert
  f\Vert^{dm}+O(1).
  \end{align*}
  This implies that
  \begin{align}\label{3.2}
  T_F(r)\leq dm\cdot T_f(r)+O(1).
  \end{align}

 For every $J\in\mathcal L$ and $i\in J,$ we have
  \begin{align}\label{3.3}
  E_{H_i}(F)&=\log\frac{\Vert F\Vert\cdot\Vert H_i\Vert}{\mid
  H_i(F)\mid}=\log\frac{\Vert F\Vert}{|
  Q_1^{I_{i1}}(f)\cdots Q_q^{I{iq}}(f)|}+O(1)\notag\\
  &=\log\frac{\Vert f\Vert^{dm}}{|
  Q_1^{I_{i1}}(f)\cdots Q_q^{I{iq}}(f)|}-dm\log\Vert
  f\Vert+\log\Vert F\Vert +O(1)\notag\\
  &=\sum_{1\leq j\leq q}I_{ij}E_{D_j}(f)-dm\log\Vert
  f\Vert+\log\Vert F\Vert +O(1).
  \end{align}

  Let
  $c_z:=(E_{D_1}(f(z)),\cdots,E_{D_q}(f(z)))$
   for every $z\in\C \setminus D,$ where $D$ denotes  the discrete subset where one of these functions takes the value $+\infty$.
   By the definition
  of the Hilbert weight, there exists a subset $J_z\in\mathcal L$ such that
  \begin{align}\label{3.4}
  S_Y(m,c_z)=\sum_{i\in J_z}I_i\cdot c_z.
  \end{align}
  By (\ref{new1}) and by Lemma \ref{L2.1}, for every $m>\bigtriangleup$ and $K\in\mathcal
K,$ we have
\begin{align}\label{3.5}
  \frac{S_Y(m,c_z)}{mH_Y(m)}\geq\frac{1}{n+1}\sum_{j\in
  K}E_{D_j}(f(z))-\frac{(2n+1)\bigtriangleup}{m}\max_{1\leq
  j\leq q}E_{D_j}(f(z)).
  \end{align}
Then, by (\ref{3.3}), (\ref{3.4}) and (\ref{3.5}), for every
$K\in\mathcal K,\; z\in \C \setminus D,$ we have
\begin{align}\label{3.6}
\frac{1}{(n+1)}\sum_{j\in
  K}&E_{D_j}(f(z))\leq \frac{S_Y(m,c_z)}{mH_Y(m)}+\frac{(2n+1)\bigtriangleup}{m}\max_{1\leq
  j\leq q}E_{D_j}(f(z))\notag\\
  &=\frac{\sum_{i\in J_z}I_i\cdot c_z}{mH_Y(m)}+\frac{(2n+1)\bigtriangleup}{m}\max_{1\leq
  j\leq q}E_{D_j}(f(z))\notag\\
  &=\frac{1}{mH_Y(m)}\sum\limits_{\begin{matrix} \scriptstyle{i\in J_z}\cr
\noalign{\vskip-0.15cm} \scriptstyle{1 \leq j \leq q}\end{matrix}}
I_{ij}E_{D_j}(z)+\frac{(2n+1)\bigtriangleup}{m}\max_{1\leq
  j\leq q}E_{D_j}(f(z))\notag\\
  &=\frac{1}{mH_Y(m)}\sum_{i\in J_z}E_{H_i}(F(z))+d\log\Vert
  f(z)\Vert\notag-\frac{1}{m}\log\Vert F(z)\Vert\notag\\
  &\quad\quad\quad+\frac{(2n+1)\bigtriangleup}{m}\max_{1\leq
  j\leq q}E_{D_j}(f(z))+O(1)\notag\\
  &\leq \frac{1}{mH_Y(m)}\max_{L\in\mathcal L}\sum_{i\in L}E_{H_i}(F(z))+d\log\Vert
  f(z)\Vert-\frac{1}{m}\log\Vert F(z)\Vert\notag\\
  &\quad\quad\quad+\frac{(2n+1)\bigtriangleup}{m}\sum_{1\leq
  j\leq q}E_{D_j}(f(z))+O(1).
\end{align}

 \noindent This implies that, for every $z\in\C \setminus D,$
\begin{align*}
\max_{K\in\mathcal K}\frac{1}{(n+1)}\sum_{j\in
K}E_{D_j}(f(z))&\leq \frac{1}{mH_Y(m)}\max_{L\in\mathcal
L}\sum_{i\in L}E_{H_i}(F(z))+d\log\Vert
  f(z)\Vert\notag\\
  &-\frac{1}{m}\log\Vert F(z)\Vert
  +\frac{(2n+1)\bigtriangleup}{m}\sum_{1\leq
  j\leq q}E_{D_j}(z)+O(1),
\end{align*}
and by continuity this then holds for all $z \in \C$.
So, by integrating and by (\ref{a6}), we get
\begin{align}\label{a10}
d\big(\Theta(q&-2N+n-1)+n+1\big)T_f(r)-\sum_{j=1}^q\omega(j)N_f(r,D_j)\notag\\
&\leq \frac{n+1}{mH_Y(m)}\int_0^{2\pi}\max_{L\in\mathcal
L}\sum_{i\in L}E_{H_i}(F(re^{i\theta}))\frac{d\theta}{2\pi}+d(n+1)T_f(r)-\frac{n+1}{m}T_ F(r)\notag\\
  &\quad\quad\quad+\frac{(2n+1)(n+1)\bigtriangleup}{m}\sum_{1\leq
  j\leq q}\int_0^{2\pi}E_{D_j}(re^{i\theta})\frac{d\theta}{2\pi}+O(1).
\end{align}
By (\ref{new2}) and Lemma \ref{L31} (with $\epsilon =1),$  we have
\begin{align}\label{3.8}
\Big\Vert \frac{n+1}{mH_Y(m)}\int_0^{2\pi}\max_{L\in\mathcal
L}&\sum_{i\in
L}E_{H_i}(F(re^{i\theta}))\frac{d\theta}{2\pi}\notag\\
  &\leq\frac{(n+1)(H_Y(m)+1)}{mH_Y(m)}T_F(r)-\frac{n+1}{mH_Y(m)}N_{W(F)}(r).
\end{align}
For each $j\in\{1,\dots,q\},$ by Jensen's formula,  we have
\begin{align}\label{a11}
\int_0^{2\pi}E_{D_j}(re^{i\theta})\frac{d\theta}{2\pi}&\leq d\int_0^{2\pi}\log\Vert f(re^{i\theta})\Vert\frac{d\theta}{2\pi}-\int_0^{2\pi}\log|Q_j(re^{i\theta})|\frac{d\theta}{2\pi}+O(1)\notag\\
&\leq dT_f(r)-N_f(r,D_j)+O(1)\leq dT_f(r)+O(1).
\end{align}

For an arbitrary $\epsilon>0,$ we choose
$$m:= [4d^{n+1}q(2n+1)(2N-n+1)\deg V \cdot \frac{1}{\epsilon}]+1\,.$$
Then, assuming without loss of generality that $\epsilon \leq 1$, by  (\ref{deg}), by Lemma \ref{m+1} and  by Proposition \ref{P24} (iii) we have $m >  \bigtriangleup$,
which we assumed for (\ref{3.5}), and
\begin{align}\label{cond}
\frac{(2n+1)(n+1)dq\bigtriangleup}{m}<\frac{\Theta\epsilon}{4} \; {\rm and}\;
\frac{(n+1)d}{H_Y(m)}<\frac{\Theta\epsilon}{4}.
\end{align}

\noindent Then, by (\ref{3.2}), (\ref{a10}),
(\ref{3.8}), and (\ref{a11}), we get
\begin{align*}
\Big\Vert\big(\Theta (q-2N+n-1)&+n+1\big)dT_f(r)-\sum_{j=1}^q\omega(j)N_f(r,D_j)\notag\\
&\leq
\big((n+1)d+\frac{\Theta\epsilon}{2}\big)T_f(r)-\frac{n+1}{mH_Y(m)}N_{W(F)}(r).
\end{align*}
Therefore,
\begin{align}\label{3.9}
\Big\Vert\Theta d(q-2N+n-1-\frac{\epsilon}{2})T_f(r)\leq\sum_{j=1}^q\omega(j)N_f(r,D_j)-\frac{n+1}{mH_Y(m)}N_{W(F)}(r)
\end{align}

For each $J:=\{j_1,\dots,j_{H_Y(m)}\}\in \mathcal L,$ then there
exists a constant $\gamma_J \in \C, \gamma_J \ne0$ such that
\begin{align*}
W(F)=\gamma_J\cdot W(Q_1^{I_{j_11}}(f)\cdots
  Q_q^{I_{j_1q}}(f),\dots,Q_1^{I_{j_{H_Y(m)}1}}(f)\cdots
  Q_q^{I_{j_{H_Y(m)}q}}(f)).
\end{align*}
On the other hand, by (\ref{new2}) and Lemma \ref{L2.2},
$$\nu_{\frac{Q_1^{I_{j_11}}(f)\cdots
  Q_q^{I_{j_1q}}(f)\cdots Q_1^{I_{j_{H_Y(m)}1}}(f)\cdots
  Q_q^{I_{j_{H_Y(m)}q}}(f)}{W\big(Q_1^{I_{j_11}}(f)\cdots
  Q_q^{I_{j_1q}}(f),\dots,Q_1^{I_{j_{H_Y(m)}1}}(f)\cdots
  Q_q^{I_{j_{H_Y(m)}q}}(f)\big)}}\leq\sum_{1\leq i\leq
  H_Y(m)}\nu_{Q_1^{I_{j_i1}}(f)\cdots
  Q_q^{I_{j_iq}}(f)}^{[H_Y(m)-1]}.$$
Hence, for all $J\in \mathcal L,$ we have
\begin{align}\label{3.10}
\nu_{W(F)}&\geq\nu_{Q_1^{I_{j_11}}(f)\cdots
  Q_q^{I_{j_1q}}(f)\cdots Q_1^{I_{j_{H_Y(m)}1}}(f)\cdots
  Q_q^{I_{j_{H_Y(m)}q}}(f)}\notag\\
  &\quad\quad-\sum_{1\leq i\leq
  H_Y(m)}\nu_{Q_1^{I_{j_i1}}(f)\cdots
  Q_q^{I_{j_iq}}(f)}^{[H_Y(m)-1]}\notag\\
  &\geq\sum_{1\leq j\leq q}\sum_{i\in
  J}I_{ij}\big(\nu_{Q_j(f)}-\nu_{Q_j(f)}^{[H_Y(m)-1]}\big).
\end{align}

 For every $z\in \C,$ let $c_z:=(c_{1,z},\dots,c_{q,z})$
where $c_{j,z}:=\nu_{Q_j(f)}(z)-\nu_{Q_j(f)}^{[H_Y(m)-1]}(z).$
 Then, by definition of the Hilbert weight, there exists $J_z\in\mathcal
L$ such that
\begin{align*}
S_{Y}(m,c_z)=\sum_{i\in J_z} I_i\cdot c_z=\sum_{1\leq j\leq
q}\sum_{i\in
  J_z}I_{ij}\big(\nu_{Q_j(f)}(z)-\nu_{Q_j(f)}^{[H_Y(m)-1]}(z)\big).
\end{align*}
Then, by (\ref{new1}) and Lemma \ref{L2.1}, for every $K\in\mathcal
K$ we have
\begin{align*}
\frac{1}{mH_Y(m)}\sum_{1\leq j\leq q}\sum_{i\in
  J_z}&I_{ij}\big(\nu_{Q_j(f)}(z)-\nu_{Q_j(f)}^{[H_Y(m)-1]}(z)\big)\notag\\
  &\geq \frac{1}{n+1}\sum_{j\in
K}\big(\nu_{Q_j(f)}(z)-\nu_{Q_j(f)}^{[H_Y(m)-1]}(z)\big)\notag\\
&\quad\quad-\frac{(2n+1)\bigtriangleup}{m}\max_{1\leq
  j\leq q}
  \big(\nu_{Q_j(f)}(z)-\nu_{Q_j(f)}^{[H_Y(m)-1]}(z)\big)\notag\\
  &\geq \frac{1}{n+1}\sum_{j\in
K}\big(\nu_{Q_j(f)}(z)-\nu_{Q_j(f)}^{[H_Y(m)-1]}(z)\big)\notag\\
&\quad\quad-\frac{(2n+1)\bigtriangleup}{m}\sum_{1\leq
  j\leq q}
  \nu_{Q_j(f)}(z).
\end{align*}
Combining with (\ref{3.10}), for every $K\in\mathcal K$ and
$z\in\C,$ we have
\begin{align*}
\frac{1}{mH_Y(m)}\nu_{W(F)(z)} \geq \frac{1}{n+1}\sum_{j\in
K}\big(\nu_{Q_j(f)}(z)&-\nu_{Q_j(f)}^{[H_Y(m)-1]}(z)\big)\notag\\
&-\frac{(2n+1)\bigtriangleup}{m}\sum_{1\leq
  j\leq q}
  \nu_{Q_j(f)}(z).
\end{align*}
 This implies that
\begin{align}\label{a7}
\frac{n+1}{mH_Y(m)}\nu_{W(F)}\geq\max_{K\in
\mathcal K}\sum_{j\in K}\big(\nu_{Q_j(f)}&-\nu_{Q_j(f)}^{[H_Y(m)-1]}\big)\notag\\
&-\frac{(n+1)(2n+1)\bigtriangleup}{m}\sum_{1\leq
  j\leq q}
  \nu_{Q_j(f)}.
\end{align}

By Theorem \ref{Th25}, for any $z\in\C$ and any  $J\in\mathcal N,$  there exists subset $K'(J,z)\in\mathcal K,$ such that
\begin{align*}
\sum_{j\in J}\omega(j)\big(\nu_{Q_j(f(z))}-\nu_{Q_j(f(z))}^{[H_Y(m)-1]}\big)&\leq \sum_{j\in K'(J,z)}\big(\nu_{Q_j(f(z))}-\nu_{Q_j(f(z))}^{[H_Y(m)-1]}\big)\notag\\
&\leq\max_{K\in\mathcal K}\sum_{j\in K}\big(\nu_{Q_j(f(z))}-\nu_{Q_j(f(z))}^{[H_Y(m)-1]}\big).
\end{align*}
This implies that
\begin{align}\label{a8}
\max_{J\in\mathcal N}\sum_{j\in J}\omega(j)\big(\nu_{Q_j(f)}-\nu_{Q_j(f)}^{[H_Y(m)-1]}\big)\leq\max_{K\in\mathcal K}\sum_{j\in K}\big(\nu_{Q_j(f)}-\nu_{Q_j(f)}^{[H_Y(m)-1]}\big).
\end{align}
On the other hand, since the hypersurfaces $D_j\;(j=1,\dots,q)$ are in $N$-subgeneral position in $V,$ we have that for any $z\in \C$ there are at least (q-N) indices $j$ of $\{1,\dots,q\}$ such that $\nu_{Q_j(f)}(z)=0.$ Thus, we have
\begin{align*}
\sum_{j=1}^q\omega(j)\big(\nu_{Q_j(f)}-\nu_{Q_j(f)}^{[H_Y(m)-1]}\big)=\max_{J\in\mathcal N}\sum_{j\in J}\omega(j)\big(\nu_{Q_j(f)}-\nu_{Q_j(f)}^{[H_Y(m)-1]}\big).
\end{align*}
Combining with  (\ref{a8}), we have
\begin{align*}
\sum_{j=1}^q\omega(j)\big(\nu_{Q_j(f)}-\nu_{Q_j(f)}^{[H_Y(m)-1]}\big)\leq\max_{K\in\mathcal K}\sum_{j\in K}\big(\nu_{Q_j(f)}-\nu_{Q_j(f)}^{[H_Y(m)-1]}\big).
\end{align*}
Therefore, by (\ref{a7}) we have
\begin{align*}
\frac{n+1}{mH_Y(m)}\nu_{W(F)}\geq\sum_{j=1}^q\omega(j)\big(\nu_{Q_j(f)}-\nu_{Q_j(f)}^{[H_Y(m)-1]}\big)
-\frac{(n+1)(2n+1)\bigtriangleup}{m}\sum_{1\leq
  j\leq q}
  \nu_{Q_j(f)}.
\end{align*}
So, by integrating and by Jensen's formula, we get
\begin{align*}
\frac{n+1}{mH_Y(m)}N_{W(F)}(r)&\geq\sum_{j=1}^q\omega(j)\big(N_f(r,D_j)-N_f^{[H_Y(m)-1]}(r,D_j)\big)\notag\\
&\quad\quad-\frac{(n+1)(2n+1)\bigtriangleup}{m}\sum_{1\leq
  j\leq q}N_f(r, D_j)\\
&\geq \sum_{j=1}^q\omega(j)\big(N_f(r,D_j)-N_f^{[H_Y(m)-1]}(r,D_j)\big)\\
&\quad\quad-\frac{(n+1)(2n+1)dq\bigtriangleup}{m}\sum_{1\leq
  j\leq q}T_f(r)-O(1)\\
  &\geq\sum_{j=1}^q\omega(j)\big(N_f(r,D_j)-N_f^{[H_Y(m)-1]}(r,D_j)\big)-\frac{\Theta\epsilon}{4}T_f(r).
\end{align*}
Combining with  (\ref{3.9}) we get
 \begin{align*}
\Big\Vert\Theta d(q-2N+n-1-\epsilon)T_f(r)\leq \sum_{j=1}^q\omega(j)N_f^{[H_Y(m)-1]}(r,D_j).
\end{align*}
On the other hand,  $\omega(j)\leq\Theta$ by 
Proposition \ref{P24} (i), therefore 
 \begin{align}\label{final}
\Big\Vert(q-2N+n-1-\epsilon)T_f(r)\leq \sum_{j=1}^q
\frac{1}{d}N_f^{[H_Y(m)-1]}(r,D_j).
\end{align}
This completes the proof of Theorem \ref{Th4} and Proposition \ref{P5} in the special case of $\deg Q_j =d$ by the fact that 
$H_Y(m)-1\leq \big(
\begin{array}{c} q+m-1\\m \end{array}
\big) -1$, note that $Y \subset \C P^{q-1}$.\\

We now prove the theorem for the general case: $\deg Q_j=d_j.$
Denote by  $d$ the least common multiple of $d_1,\dots, d_q$ and put
$d_j^*:=\frac{d}{d_j}.$ By (\ref{final}) with the 
hypersurfaces $Q_j^{d_j^*}$ $(j\in\{1\dots,q\})$ of common degree
$d,$ we have
\begin{align*}
\Vert (q-2N+n-1-\varepsilon) T_f(r) &\leq \sum_{j=1}^q \frac{1}{d}
N^{[H_Y(m)-1]}_f(r,Q_j^{d_j^*})\notag\\
&\leq\sum_{j=1}^q \frac{d_j^*}{d}
N^{[[\frac{H_Y(m)-1}{d_j^*}+1]]}_f(r,Q_j)\notag \\
&\leq \sum_{j=1}^q \frac{1}{d_j} N^{[L_j]}_f(r,Q_j),
\end{align*}
where $$L_j:=[\frac{d_j(H_Y(m)-1)}{d}+1]
\leq \big[\frac{d_j \big(
\begin{array}{c}q+m-1\\m \end{array}
\big)}{d} +1
\big]
.$$ This completes the proof of  
Theorem \ref{Th4} and of Proposition \ref{P5}.
\hfill  $\square $\\

\vspace{1cm}

\newpage
\noindent  {\tac Gerd Dethloff}$^{1,2}$\\
 $^1$ Universit\'e Europ\'eenne de Bretagne, France\\
 $^2$
Universit\'{e} de Brest \\
   Laboratoire de math\'{e}matiques \\
UMR CNRS 6205\\
6, avenue Le Gorgeu, BP 452 \\
   29275 Brest Cedex, France \\
e-mail: gerd.dethloff@univ-brest.fr\\

\vspace{0.2cm}
\noindent {\tac Tran Van Tan}\\
Department of Mathematics\\
  Hanoi National University of Education\\
 136-Xuan Thuy street, Cau Giay, Hanoi, Vietnam\\
e-mail: tranvantanhn@yahoo.com; vtran@ictp.it\\

\vspace{0.2cm}
\noindent {\tac Do Duc Thai}\\
Department of Mathematics\\
  Hanoi National University of Education\\
 136-Xuan Thuy street, Cau Giay, Hanoi, Vietnam\\
e-mail: ducthai.do@gmail.com


\begin{thebibliography}{99}

\bibitem{CG} J. Carlson and Ph. Griffiths, \textit{A defect relation for
equidimensional holomorphic mappings between algebraic varieties},
Ann. of Math. \textbf{95} (1972), 557-584.

\bibitem{C} H. Cartan, \textit{Sur les z\'{e}roes des combinaisons
lin\'{e}aires de $p$ fonctions holomorphes donn\'{e}es}, Mathematica
 \textbf{7} (1933), 80-103.

\bibitem{Ch} W. Chen, \textit{Cartan's conjecture: Defect relations for meromorphic maps from parabolic manifold to projective space,} Thesis, University of Notre Dame, 1987.


 \bibitem{Co} P. Corvaja and U. Zannier, \textit{On a general Thue's equation},
 Amer. J. Math. \textbf{126} (2004),
1033-1055.

\bibitem{DT} G. Dethloff and T. V. Tan, \textit{A second main theorem for moving hypersurface targets,}
Preprint arXivmath.CV/0703572,  to appear in Houston J. Math..


\bibitem{ES} A. E. Eremenko and M. L. Sodin, \textit{The value
distribution of meromorphic functions and meromorphic curves from
the point of view of potential theory}, St. Petersburg Math. J.
\textbf{3} (1992), 109-136.


\bibitem{EF} J. H. Evertse and R. G. Ferretti, \textit{Diophantine
inequalities on projective varieties,} Internat. Math. Res. Notices
\textbf{25} (2002), 1295-1330.

\bibitem{EF2} J. H. Evertse and R. G. Ferretti, \textit{A generalization of the subspace theorem with polynomials of higher degree,}
Developments in Mathematics
\textbf{16}, 175-198, Springer-Verlag, New York (2008).

\bibitem{b5} H. Fujimoto, \textit{Non-integrated defect relation for
meromorphic maps of complete K\"ahler manifolds into
$P^{N_1}(\C) \times \cdots \times P^{N_k}(\C)$},
Japan J. Math. \textbf{11} (1985), 233-264.

\bibitem{F} H. Fujimoto, \textit{Value distribution theory of the
Gauss map of minimal surfaces in $\R^m,$}  Vieweg-Verlag, Braunschweig (1993).

\bibitem{Ha} R. Hartshorne, \textit{Algebraic Geometry}, Grad. Texts in
Math. \textbf{52}, Springer-Verlag, New York (1977).


\bibitem{M} D. Mumford, \textit{The red book of varieties and
schemes,} 1st ed.,  Springer-Verlag, Berlin (1988).


\bibitem{N}E. I. Nochka, \textit{On the theory of meromorphic functions,} Soviet. Math. Dokl. \textbf{27} (1983),
377-391.

\bibitem{NW} J. Noguchi and J. Winkelmann, \textit{Holomorphic curves and integral points off divisors}, Math.Z. \textbf{239} (2002), 593-610.

\bibitem{R1} M. Ru, \textit{On a general form of the Second Main Theorem},
Trans. AMS \textbf{349} (1997), 5093-5105.

\bibitem{R2} M. Ru, \textit{A defect relation for holomorphic curves
interecting hypersurfaces}, Amer. J. Math. \textbf{126} (2004),
215-226.

\bibitem{R3} M. Ru, \textit{Holomorphic curves into algebraic varieties},
 Ann. of Math. \textbf{169} (2009), 255-267.

 \bibitem{RW} M. Ru and P. M. Wong, \textit{Integral points of $\P^n-\{2n+1$ hyperplanes in general position$\},$} Invent. Math. \textbf{106} (1991), 195-216.

\bibitem{Sha}I. R. Shafarevich, \textit{Basic Algebraic Geometry,} Revised printing, Springer-Verlag, Berlin (1977).

\bibitem{Sh} B. Shiffman, \textit{On holomorphic curves and meromorphic
map in projective space}, Indiana Univ. Math. J. \textbf{28} (1979),
627-641.

\bibitem{ThTh} D. D. Thai and N. V. Thu, \textit{The second main theorem for hypersurfaces,} preprint.

\bibitem{To}  N. Toda \textit{A generalization of Nochka weight function},  Proc. Japan Acad. 
\textbf{83} (2007),  170-175.

\bibitem{V} P. Vojta, \textit{On Cartan's theorem and Cartan's conjecture},
Amer. J. Math. \textbf{119} (1997), 1-17.

\bibitem{V2} P. Vojta, \textit{On the Nochka-Chen-Ru-Wong proof of Cartan's conjecture,}
J. of Number Theory \textbf{125} (2007), 229-234.
\end{thebibliography}
\end{document}